\newtheorem{theorem}{Theorem}[section]
\newtheorem{lemma}{Lemma}[section]
\newtheorem{defn}{Definition}[section]
\newtheorem{ex}{Example}[section]
\newtheorem{remark}{Remark}[section]
\newtheorem{proposition}[theorem]{Proposition}
\newtheorem{corollary}{Corollary}[section]
\newcommand{\CC}{\mathbb{C}}
\newcommand{\RR}{\mathbb{R}}
\newcommand{\NN}{\mathbb{N}}
\newcommand{\sgn}{\mathrm{sgn}}
\newcommand{\tq}{\tilde{Q}}
\definecolor{jade}{rgb}{0.10, 0.56, 0.42}
\definecolor{cerise}{rgb}{0.87, 0.19, 0.39}
\tikzstyle{mutable}=[inner sep=0.5mm,circle,draw,minimum size=2mm]
\tikzstyle{frozen}=[inner sep=1mm,rectangle,draw]
\tikzstyle{outline}=[thick,line width=1.5mm,draw=black!10]
\title[Rigid potentials for cluster algebras from double Bruhat cells]
{Rigid potentials for cluster algebras from double Bruhat cells}
\author{Maitreyee C. Kulkarni}
\address{School of Mathematics,
Institute for Advanced Study, Princeton}
\email{mkulka2@ias.edu}
\thanks{The author was supported by the NSF Grant No. DMS-1601862 while at LSU and the Charles Simonyi Endowment while at the IAS}
\begin{document}
\begin{abstract} 
Buan, Iyama, Reiten and Smith proved that the superpotential of a quiver corresponding to an element of Coxeter group is rigid. In this paper, we extend this result to the Berenstein-Fomin-Zelevinsky quivers corresponding to double Bruhat cells of symmetric Kac-Moody algebras.  We also realize these quivers as dimers models on cylinder over corresponding graphs.

\end{abstract}
\maketitle

\section{Introduction} 

The categorical models for cluster algebras have been studied since cluster categories were defined in [BMRRT].  On the other hand, \cite{gls} have used modules over preprojective algebras of Dynkin quivers for modular model of cluster algebras.  The triangulated Calabi-- Yau categories (of dimension 2) include the cluster categories and the stable categories of modules over preprojective algebras. The special objects in these categories called cluster tilting objects correspond to clusters in cluster algebras.  Hence the study of these objects is important to understand the corresponding cluster algebras. The endomorphism algebras of the cluster tilting objects are called cluster tilting algebras.  Tilting modules also play an important role in the representation theory of finite dimensional algebras, and the tilted algebras form a central class of algebras.
The relationship between cluster tilting objects and their quivers was studied in \cite{birs11}.

One of the key properties of the cluster tilting objects in their application to cluster algebras is having no loop or 2-cycles in the quivers of the corresponding cluster tilting algebras.  This property is not satisfied by all cluster tilting objects in general.   It is known that the quivers defined in \cite{bfz05} corresponding to a single Weyl group element are associated to the endomorphism algebras of cluster tilting objects in a subcategory of modules over certain quotient of a preprojective algebra.  The cluster-tilted algebras are determined by their quiver.  In particular, any cluster-tilted algebra is isomorphic to a Jacobian algebra of a rigid QP (quiver with potential).  

For a quiver $Q$ with no oriented cycles or loops, let $\tq$ be its double quiver constructed by adding a reverse arrow $a^*$ to each arrow $a$ of $Q$.  Let $c$ be the element defined as:
\[c=\sum_{a \in Q_1} (a^*a- aa^*)\] where $Q_1$ is the set of arrows of $Q$. Consider the ideal $(c)$ generated by the element $c$, then the preprojective algebra $\Lambda$ is 
\[ \Lambda = \CC(\tq) / (c)\]
Let $w$ be an element in the Coxeter group $W_Q$ associated to $Q$. Let $w=s_{i_1}\ldots s_{i_m}$ be its reduced expression.  For each $w$, associate a quotient of $\Lambda$, denoted by $\Lambda_w$, which is independent of the reduced expression of $w$.  Consider the category of the modules over $\Lambda_w$. Let  Sub$\Lambda_w$ be the full subcategory of this category whose objects are the submodules of finitely generated projective $\Lambda_w$-modules.  Then the quiver for the endomorphism algebras of the cluster tilting objects of the stable category $\underline{\text{Sub}}\Lambda_w$ are  given by the Berenstein-Fomin-Zelevinsky quivers from \cite{bfz05}.

These cluster tilted algebras form a large class of 2-CY-tilted algebras. They are Jacobian algebras coming from rigid QPs of BFZ quivers. It is shown that the superpotential of such a quiver is rigid, independently in \cite{birs11}, \cite{k17}.  The BFZ quivers are defined for a pair of Weyl group elements.  The aim of this paper is to prove that the superpotential of all BFZ quivers $Q^{u,v}$ coming from a non-cyclic graph is rigid, using techniques developed in \cite{k17}. A QP that is mutational equivalent to a rigid QP is also rigid, hence this gives rise to many examples of rigid QPs.

In the case of Grassmannians,  Scott showed that the homogeneous coordinate ring of $\mathrm{Gr(k,n)}$ is a cluster algebra \cite{s06} using Postnikov diagrams.  These were defined by Postnikov \cite{p06} and they encode information about seeds of the cluster algebra and its clusters.  Baur, King and Marsh used Postnikov diagrams to give a combinatorial model for this categorification. The authors showed that the completion of the endomorphism algebra is a frozen Jacobian algebra \cite{bkm16}.  Every region in a Postnikov diagram corresponds to a $k$-subset of $\{1, 2, \ldots, n\}$. For a $k$-subset $I$ of $\{1, 2, \ldots, n\}$ corresponding to a minor of the matrix, assign a Cohen--Macaulay $B$-module $M_I$. Then a Postnikov diagram $D$ can be associated with the module $T_D=\bigoplus_I M_I$.  They define a dimer algebra as the Jacobian algebra for the quiver corresponding to a Postnikov diagram. One of the main results in their paper is that the dimer algebra $A_D$ is isomorphic to $\mathrm{End}_B(T_D)$.  This result gives a combinatorial construction of the endomorphism algebra required for their categorification \cite{bkm16}.

 In the previous paper \cite{k17} the author introduced conceptual framework for dimer models for types other than type A.  In this paper, we will see that the BFZ quivers can be realized as dimer models on the cylinder over the graph of $G$. For a graph $\Gamma$ (in particular any Dynkin diagram), let $\Gamma \times \RR^{\geq 0}$ be the cylinder over $\Gamma$ [see ~Figure 1].  A vertex in a graph is called a branch point if it has more than two edges incident to it.  A vertex is called an endpoint if it has exactly one edge incident to it.  Let $V$ be the set of endpoints and branching points of graph.  Let $a$ and $b$ be two vertices in $V$. A path $\Gamma_{a,b}$ between any two vertices  will be called a branch in the graph and $\Gamma_{m,n}\times \RR^{\geq0} $  in the cylinder will be called the sheet of the cylinder.  Any quiver on a cylinder over a Dynkin diagram that satisfying the properties given below will be called a dimer model on the cylinder [see Definition \ref{def:dimer}]:
\begin{itemize}
\item Each face of $Q^{u,v}$ is oriented.
\item Each face of $Q^{u,v}$ on the cylinder $\Gamma \times \RR^{\geq0}$ projects onto an edge of the graph.
\item Each edge of $Q^{u,v}$ projects onto a vertex of the graph or an edge of the graph. 
\end{itemize}

\begin{figure}[ht!]
\begin{tikzpicture}

\node [fill,circle,scale=0.5,label=below:$$] (2) at (2,0) {};
\node [fill,circle,scale=0.5,label=below:$$] (4) at (3.9,-0.7) {};
\node [fill,circle,scale=0.5,label=right:$$] (n) at (3.7,0.5) {};
\node [fill,circle,scale=0.5,label=below:$$] (3) at (3,0) {};
\draw [thick,-,red] (3)-- (4);
\draw [thick,-,cyan] (3)-- (n);
\draw [thick,-,jade] (3)--  (2);
\draw (2,0)-- (2,3);
\draw (3,0)-- (3,3);
\draw [cyan](3.7,0.5)-- (3.7,3.5);
\draw [thick,-,red] (3.9,-0.7)-- (3.9,2.3);
\fill[nearly transparent, green]  (2.93,2.92) rectangle (2.07,0.07);
\fill[nearly transparent,cyan] (3.07,0.12) -- (3.07,2.93)--(3.63,3.43)--(3.63,0.57);
\fill[nearly transparent,red] (3.07,0.02) -- (3.07,2.83)--(3.83,2.19)--(3.83,-0.59);
\end{tikzpicture}

\begin{tikzpicture}
\node [fill,circle,scale=0.5,label=below:$4$] (4) at (4.7,-0.5) {};
\node [fill,circle,scale=0.5,label=right:$2$] (n) at (4.7,0.5) {};
\node [fill,circle,scale=0.5,label=below:$1$] (3) at (3,0) {};
\node [fill,circle,scale=0.5,label=below:$3$] (5) at (4,0) {};
\draw [thick,-] (5)-- (4);
\draw [thick,-] (5)-- (n);
\draw [thick,-] (3)-- (5);
\end{tikzpicture}
\caption{The cylinder over Dynkin diagram of type $D_4$}
\end{figure}
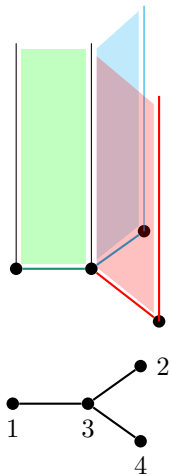

Using this structure on the quivers and the results from \cite{k17}, we prove that BFZ quivers with their superpotentials are rigid QPs. The dimer model structure of these quivers reduces the global problem of verification of rigidity of the super-potential to a local problem on each sheet which allows us to think of a subquiver on a sheet as a dimer model on a disk. In Section 2, we define quivers from double Bruhat cells.  We will also see a detailed example of a quiver $Q^{u,v}$ as defined in \cite{bfz05}.  Section 3 contains some properties of these quivers. In Section 4, we prove our main theorem. First we will see the construction of the quiver $Q^{u,v}$ using the quivers corresponding to a Bruhat cell, i.e. $Q^{u,e}$ and  $Q^{e,v}$.  Then we will prove the following theorem:
\begin{theorem} Let $\Gamma$ be a graph with no loops and cycles. For any Weyl group elements $u,v$ in the corresponding Weyl group, the superpotential of the BFZ quiver $Q^{u,v}$ is rigid. 
\end{theorem}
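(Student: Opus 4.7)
The plan is to exploit the dimer-model structure on the cylinder $\Gamma \times \RR^{\geq 0}$ in order to reduce the global rigidity problem for $Q^{u,v}$ to a family of local problems, one per sheet, where the single-Weyl-element rigidity results of \cite{birs11} and \cite{k17} can be applied directly. First I would construct $Q^{u,v}$ from $Q^{u,e}$ and $Q^{e,v}$ as announced in Section~4, and write the superpotential as $W^{u,v} = W^{u,e} + W^{e,v} + W^{\mathrm{glue}}$, where the gluing term records precisely the new oriented faces that appear when the two halves are identified along the frozen vertices indexed by the nodes of $\Gamma$. Rigidity of $W^{u,e}$ and $W^{e,v}$ individually is already known from \cite{birs11,k17}, so the substance of the proof lies in controlling $W^{\mathrm{glue}}$ and in checking that combining the two pieces does not create non-trivial deformations in the arrow span modulo the Jacobian ideal.

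Next I would invoke the three properties from the definition of a dimer model on the cylinder: every face is oriented, every face projects to a single edge of $\Gamma$, and every arrow projects to either a vertex or an edge of $\Gamma$. The first two properties imply that each face of $Q^{u,v}$ lies entirely on a single sheet $\Gamma_{m,n} \times \RR^{\geq 0}$, so $W^{u,v}$ decomposes as a sum $\sum_{(m,n)} W_{m,n}$ indexed by branches of $\Gamma$, with each $W_{m,n}$ supported on its sheet. The subquiver on a sheet is a dimer on a disk sitting over a path branch $\Gamma_{m,n}$, and hence combinatorially of the same type as a BFZ quiver coming from a single Weyl group element of the type~$A$ sub-root system cut out by that branch; therefore the rigidity of each sheet QP $W_{m,n}$ is provided by \cite{k17}.

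The main obstacle will be upgrading this sheet-wise rigidity to a global statement for $W^{u,v}$. Because $\Gamma$ has no cycles, the set of sheets is organized as a tree, and the argument should proceed by induction on the number of branches. At each inductive step I would peel off a leaf branch and its sheet: any cyclic deformation of $W^{u,v}$ in the arrow span restricts to a deformation on that sheet, which is trivialized by the sheet-level Jacobian relations via the local rigidity above; the resulting correction is supported only on the vertical arrows above the unique branch point attaching this leaf sheet to the rest, and it can be absorbed into the Jacobian ideal of the smaller cylinder obtained by deleting the leaf branch. The base case, a single branch, is exactly the single-element situation already settled in \cite{birs11,k17}. The delicate bookkeeping — and the step I expect to be the hardest — will be at the branch points of $\Gamma$, where several sheets share a common column of vertical arrows: one must check that the local trivializations coming from different sheets are simultaneously compatible with the cyclic-derivative relations on that column. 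This compatibility is precisely what the tree structure of $\Gamma$ guarantees, since every branch point decomposes the cylinder into sheets meeting along a single column, with no global loop to obstruct the inductive simplification.
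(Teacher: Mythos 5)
Your overall skeleton (localize to sheets via the dimer structure, then globalize) matches the paper's, but there are two genuine gaps. The first and most serious is your claim that the subquiver of $Q^{u,v}$ on a sheet is ``combinatorially of the same type as a BFZ quiver coming from a single Weyl group element,'' so that its rigidity follows directly from \cite{birs11,k17}. This is false, and it conceals exactly the point the theorem has to address. In $Q^{u,e}$ all vertical arrows on a given string point in the same direction, because every letter of the shuffle carries the same sign $\epsilon$; after gluing $Q^{e,v}$ on top of $Q^{u,e}$, a string of $Q^{u,v}$ carries arrows in both directions, and faces can straddle the gluing line (indeed, when the two identified edges are oppositely directed they are deleted and two faces merge into one larger face). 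So a sheet of $Q^{u,v}$ is itself a quiver of double type $Q^{u',v'}$ over a type $A$ branch, and its rigidity is an instance of the very theorem you are proving --- your reduction is circular. The paper avoids this by re-establishing, for $Q^{u,v}$ itself, the two combinatorial inputs that the inductive proof of \cite{k17} actually uses: every face of $Q^{u,v}$ is oriented (proved by a case analysis of the gluing), and two faces on a sheet share at most one edge (proved by a reduced-word argument: two shared edges would force a subword $s_r \, w \, s_r$ with $w$ containing none of $s_{r-1}, s_r, s_{r+1}$, contradicting reducedness of $u$). With those properties in hand, the induction of \cite{k17} applies to each sheet of $Q^{u,v}$ even though the sheet is not a single-element quiver. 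Relatedly, your decomposition $W^{u,v}=W^{u,e}+W^{e,v}+W^{\mathrm{glue}}$ is not accurate: because gluing can merge faces, the terms of $W^{u,e}$ and $W^{e,v}$ need not survive as terms of $W^{u,v}$.

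The second gap is in your globalization step. Rigidity demands that \emph{every} cycle of $Q^{u,v}$ be cyclically equivalent to an element of $J(S)$, and a cycle may wander across several sheets, crossing the spine repeatedly; such a cycle does not ``restrict to a deformation on a sheet,'' so your leaf-peeling induction has no mechanism to handle it, and the assertion that the tree structure of $\Gamma$ ``guarantees'' compatibility at branch points is doing all the work without an argument. The paper supplies a concrete device: every cycle $C$ is \emph{differentiable}, i.e.\ admits an interior edge separating it into a face and a smaller cycle; since in the Jacobian algebra the two paths bounding a face between its endpoints are identified, one may replace the portion of $C$ lying on the face boundary by the interior path, strictly shrinking the region enclosed by $C$, and iterate until $C$ is a product of a face and a cycle. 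Faces lie on single sheets and are already in $J(S)$ by the sheet-level argument, so $C \in J(S)$. Some such area-reduction (or an equally explicit substitute) is what your proposal needs where it currently appeals to the absence of cycles in $\Gamma$.
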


\section{Quivers from double Bruhat cells} 

Berenstein, Fomin and Zelevinsky defined certain quivers from double Bruhat cells.  In the paper  \cite{bfz05}, they showed that the coordinate rings of double Bruhat cells can be identified with certain upper cluster algebras.  These algebras are defined using the combinatorial data that comes from the corresponding double Bruhat cells. This combinatorial data can be encoded in a quiver. We recall the definition of this quiver in this section.

Let $G$ be a simply connected, connected, semisimple complex algebraic group of rank $r$. Let $B$ and $B_{-}$ be opposite borel subgroups and $W$ be its Weyl group. Then $G$ can be written as \[G=\cup_{u \in W} BuB = \cup_{v \in W} B_{-}vB_{-}.\] The double Bruhat cell $G_{u,v}$ is defined as the intersection $BuB\cap B_{-}vB_{-}$ where $u,v \in W$.  To each such pair of Weyl elements $(u, v) \in W \times W$, we can associate a quiver $Q^{u,v}$ as defined in \cite{bfz05}.  There are two ways to define this quiver.  We can get the quiver directly from the data given by the words $u$ and $v$, or we can define a matrix that gives the quiver.  We will see both ways in this section. 

Let $G$ be a simply connected complex algebraic group. Let $W$ be the Weyl group and $\mathfrak{g}$ be the Lie algebra of $G$. Every Weyl group can be realized as a Coxeter group with reflections ${s_1, s_2, \ldots, s_r}$ of simple roots as its generators. Each $s_i$ is an involution and $(s_is_j)^{m_{ij}} = 1 \in W$, for some integer $m_{ij}$ encoded in the Dynkin diagram.  Every element $w \in W$ has a smallest expression in terms of $s_i$'s.  A word is a tuple of indices of simple reflections in the smallest expression for $w$.   If for $w=s_{i_1}s_{i_2}\cdots s_{i_l}$ in $W$ is the smallest such expression in terms of the generators of $W$ then the word $i =(i_1, i_2, \cdots, i_l), \  i_j\in[1, \cdots, r]$ is said to be in its reduced form.  The length of the word $w$ is denoted by $\ell(w)$ and in this case $\ell(w)= l$.

Fix a pair $(u,v) \in W \times W$. Let us use negative indices for the generators of the first copy of $W$ and positive indices for the second copy of $W$.  Then a reduced word $\textbf{i} = (i_1, \ldots, i_{\ell(u)+\ell(v)})$ is an arbitrary shuffle of a reduced word for $u$ and a reduced word for $v$.  We add the numbers $(-r, \ldots, -1)$ to the tuple $\textbf{i}$ to get a new tuple \[\hat{\textbf{i}}=(-r, \ldots, -1, i_1, \ldots,  i_{\ell(u)+\ell(v)}).\] 
For $k \in [-r, -1] \cup [1, \ell(u)+\ell(v)]$, we define $k^+$ to be the smallest index $l$ such that $k<l$ and $|i_k|=|i_l|$.  If $|i_k|\neq |i_l|$ for any $l>k$, then $k^+= \ell(u)+\ell(v)+1$. An index is called $i$-exchangeable if both $k$ and $k^+$ are in $[-r, -1] \cup [1, \ell(u)+\ell(v)]$.  The set if $i$- exchangeable indices is denoted by $\textbf{e}(\textbf{i})$.

\begin{defn}\label{def:bfz} Let $u, v \in W$.  A BFZ quiver $Q^{u,v}$ has set of vertices $Q_0 = \hat{\textbf{i}}$  Vertices $k$ and $l$ such that $k < l$ are connected if and only if either $k$ or $l$ are $i$-exchangeable. There are two types of edges: 
\begin{itemize}
\item  An edge is called horizontal if $l = k^+$ and it is directed from $k$ to $l$ if and only if $\epsilon(i_l) =+1$.
\item An edge is called inclined if one of the following conditions hold: 
\begin{enumerate}
\item $l < k^+ < l^+$, $a_{|i_k|, |i_j|} < 0$, $\epsilon(i_l) = \epsilon(i_{k^+})$
\item $l < l^+ < k^+$, $a_{|i_k|, |i_j|} < 0$, $\epsilon(i_l) = -\epsilon(i_{l^+})$
\end{enumerate} 
An inclined edge is directed from $k$ to $l$ if and only if $\epsilon(i_l) =-1$.
\end{itemize}
\end{defn}

Let $\widetilde{B}(\textbf{i})$ be the matrix corresponding to this quiver. Its rows are labelled by the set $ [-r, -1] \cup [1, \ell(u)+\ell(v)]$ and columns are labelled by $\textbf{e}(\textbf{i})$.  It is defined as follows:
\[   
b_{kl} = 
     \begin{cases}
       -\sgn(k-l)\epsilon(i_p) &\quad\text{if $p= q$}\\
       -\sgn(k-l)\epsilon(i_p)a_{|i_k|,|i_l|} &\quad\text{if $p < q$ and $(k-l)(k^+-l^+)\epsilon(i_p)\epsilon(i_q) > 0$} \\
        0 &\quad\text{otherwise} \\
       \end{cases}
\]
where $p=\max \{k, l\}$, $q=\min\{k^+, l^+\}$ and $a_{|i_k|,|i_l|}$ is the corresponding entry in the Cartan matrix.
The vertices of the quiver correspond to the set $ [-r, -1] \cup [1, \ell(u)+\ell(v)]$.  The edges are given by the matrix entries.  Two vertuces $k$ and $l$ are connected if and only if $b_{kl} \neq 0$.  If $b_{kl}>0$ then the edge is directed from $k$ to $l$.  If $b_{kl}<0$, then the edge goes from $l$ to $k$. 

We will see an example of a BFZ quiver below.
Consider the group $SL_4(\CC)$.  Here $B$ is the Borel group of upper-triangular matrices and $B_-$ is the group of lower-triangular matrices. The Weyl group in this case is $W=S_4$, the permutation group on four elements. Let $u=w_0=s_3s_2s_1s_2s_3, v=e \in S_4$. So $\ell(u)=5$ and $\ell(v)=0.$  The element $u$ is the longest element of $W$. The quiver $Q^{u,v}$ corresponding to the double Bruhat cell $G_{u,v}$ is as shown below in Figure~\ref{sl4}. (The Dynkin diagram $A_3$ is not a part of the quiver.). Following is the detailed computations for this example.
 
The word $u=w_0=s_3s_2s_1s_2s_3$ is a reduced word as no braid relation can reduce its length. As the second word $v$ is the identity, it does not contribute any vertex or an edge to the quiver $Q^{u,v}$.   So, $\ell(u)+\ell(v)=5+0$, $r=3$ and
\[\hat{\textbf{i}}=(-3, -2, -1, \ 3, \ 2 \ ,1, \ 2, \ 3) \ \ \   \text{or} \] 
\[\begin{array}{c|c|c|c|c|c|c|c|c}
 k & -3 & -2 & -1& 1 & 2 & 3 & 4 & 5\\
 \hline
 \hat{\textbf{i}} & -3 & -2 & -1& 3 & 2 & 1 & 2 & 3 \\
     \hline
i_k & i_{-3} & i_{-2} & i_{-1} & i_{1}  & i_{2}  & i_{3}  & i_{4}  & i_{5} 
 \end{array}\]

Let us compute $k^+$ for each $k$.  From the definition of $k^+$, we know that it tells the next entry in $\hat{\textbf{i}}$ which matches $i_k$ up to sign.  For example, for $k=-3$, $k^+ =1$ because $|i_{-3}|= |i_1| = 3$ and there is no appearance of 3 or $-3$ between those two (i.e. if $k= -3$, $k^+$ cannot be 5 even though $|i_{-3}|= |i_5| = 3$ because those are not the consecutive appearances of 3 or $-3$).  The following table shows $k^+$ for this example. 
\[\begin{array}{c|c|c|c|c|c|c|c|c}
 k & -3 & -2 & -1& 1 & 2 & 3 & 4 & 5\\
 \hline
k^+ & 1 & 2 & 3& 5 & 4 & 6 & 6 & 6 
 \end{array}\]
If $|i_k|\neq |i_l|$ for any $l>k$, then $k^+= \ell(u)+\ell(v)+1$.  For example, for $k = 3$, $i_k=i_3=1$ is the last appearance of 1 in $ \hat{\textbf{i}}$ and so $k^+= 6$. Similarly, $4^+=5^+=6$.  An index is called $i$-exchangeable if both $k$ and $k^+$ are in $[-r, -1] \cup [1, \ell(u)+\ell(v)]$.  So 3, 4, 5 are not $i$-exchangeable.  Also, $-3, -2, -1$ are not $i$-exchangeable. The only $i$-exchangeable indices are $k=1, 2$.  Therefore, $\textbf{e}(\textbf{i})=\{1, 2\}$.  

The matrix $\widetilde{B}(\textbf{i})$ is an $8 \times 2$ matrix whose rows are labelled by (-3, -2, -1, 1, 2, 3, 4, 5) and columns are labelled by (1,2). We will compute the entries $b_{kl}$ in the following table.  If $p=q$, then the entry in the matrix is zero, and so we do not compute the rest of the values in that particular row.  Similarly, if $p >q$, then the corresponding entry in the matrix is zero.  For example, the entry $b_{-3, 2}$ is zero since $p >q$ in that row, so we do not meed to compute the rest of the entries.  When $k=l$, the entry $b_{kl}=b_{kk}=0$ since the sign of $(k-l)$ determines the entry. 

\begin{center}
$\begin{array}{c|c|c|c|c|c|c|c|c|c}
 k & l & p = & q= & \epsilon(i_p) & \epsilon(i_q) & \sgn(k-l) & \sgn(k^+-l^+) &a_{|i_k|,|i_l|}& b_{kl}\\
  &  & \max(k, l) & \min(k^+, l^+) &  &  &  &  &  & \\
 \hline
-3 & 1 & 1 & 1& + &  + & - &  &  &1\\
 \hline
-2 & 1 & 1 & 2& + & + & - & - & -1 &-1\\
 \hline
-1 & 1 & 1 & 3& + & + & - & - & 0 & 0\\
 \hline
1 & 1 & 2 & 5& + & + & 0 &  &  &0\\
 \hline
2 & 1 & 2 & 4& + & + & + & - & -1 & 0\\
 \hline
3 & 1 & 3 & 5& + & + & +& - & 0 & 0\\
 \hline
4 &1 & 4 & 5& + & + & + & - & -1 & 1\\
 \hline
5 & 1 & 5 & 5& + & + & + &  &  &-1\\
 \hline
-3 & 2 & 2 & 1&  &  &  &  &  &0\\
 \hline
-2 & 2 & 2 & 2& + & + & - & - &  & 1\\
 \hline
-1 & 2 & 2 & 3& + & + & - & - & -1 &-1\\
\hline
1 & 2 & 2 & 4& + & + & - & - & -1 &0\\
 \hline
2 & 2 & 2 & 4& + & + & 0 &  &  &0\\
 \hline
3 & 2 & 3 & 4& + & + & +& - & -1 & 1\\
 \hline
4 & 2 & 4 & 4& + & + & + &  &  &-1\\
 \hline
5 & 2 & 5 & 4 &  & & &  &  & 0\\
\hline
 \end{array}$
 \end{center}

 So the matrix $\widetilde{B}(\textbf{i})$ is:
 
 \[\begin{array}{c|ccc}
  & 1& & 2 \\
 \hline
-3 & 1 && 0\\
-2 & -1 && 1 \\
-1 & 0 && -1 \\
1 & 0 & & 0 \\
2 & 0 & & 0 \\
3  & 0 & & 1\\
4  & 1 & & -1\\
5  & -1 & & 0\\
 \end{array}\]
 Since $b_{-3,1} = 1$, the edge in the quiver is directed from -3 to 1.  On the other hand, since $b_{-2,1}=-1$,  the edge is directed from 1 to -2.  There are no edges between $k$ and $l$ if $b_{kl}=0$.  So the quiver $Q^{w_0,e}$ is as shown below:
  \begin{figure}[ht!]
\begin{center}
\begin{tikzpicture}
\node [frozen] (v2) at (0,0) {5};
\node [mutable] (v5) at (0,1.5) {1};
\node [frozen] (vn) at (0,3) {-3};

\node [frozen] (v3) at (1.5,0) {4};
\node [mutable] (v1) at (1.5,1.5) {2};
\node [frozen] (v6) at (1.5,3) {-2};

\node [frozen] (v4) at (3,0) {3};
\node [frozen] (v) at (3,1.5) {-1};
\node [fill,circle,scale=0.5,label=below:3] (1) at (0,-1.5) {};
\node [fill,circle,scale=0.5,label=below:2]  (2)at (1.5,-1.5) {};
\node [fill,circle,scale=0.5,label=below:1] (3) at (3,-1.5) {};
\node at (-1.5,-1.5) {$A_3:$};
\node at (-1.5,1.5) {$Q^{w_0,e}:$};
\draw (1) -- (2);
\draw (2) -- (3);
\draw [thick,->] (v5)-- (v2);
\draw [thick,->] (v1)-- (v3);
\draw [thick,->] (vn)-- (v5);
\draw [thick,->](v6) -- (v1) ;
\draw [thick,->](v) -- (v4) ;
\draw [thick,->] (v3)-- (v5);
\draw [thick,->] (v5)-- (v6);
\draw [thick,->] (v1)-- (v);
\draw [thick,->] (v4)-- (v1);
\end{tikzpicture}
\caption{A BFZ quiver in type $A$}
\label{sl4}
\end{center}
\end{figure}
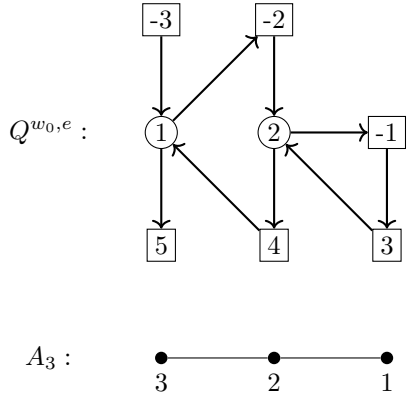
 
 \begin{figure}[ht!]
\begin{center}
\begin{tikzpicture}
\node [frozen] (v2) at (0,0) {5};
\node [mutable] (v5) at (0,1.5) {1};
\node [frozen] (vn) at (0,3) {-3};

\node [frozen] (v3) at (1.5,0) {4};
\node [mutable] (v1) at (1.5,1.5) {2};
\node [frozen] (v6) at (1.5,3) {-2};

\node [frozen] (v4) at (3,0) {3};
\node [frozen] (v) at (3,1.5) {-1};
\node [fill,circle,scale=0.5,label=below:3] (1) at (0,-1.5) {};
\node [fill,circle,scale=0.5,label=below:2]  (2)at (1.5,-1.5) {};
\node [fill,circle,scale=0.5,label=below:1] (3) at (3,-1.5) {};
\node at (-1.5,-1.5) {$A_3:$};
\node at (-1.5,1.5) {$Q^{w_0,e}:$};
\draw (1) -- (2);
\draw (2) -- (3);
\draw [thick,->] (v5)-- (v2);
\draw [thick,->] (v2)-- (v3);
\draw [thick,->] (v1)-- (v3);
\draw [thick,->] (v3) -- (v4) ;
\draw [thick,->] (vn)-- (v5);
\draw [thick,->](v6) -- (v1) ;
\draw [thick,->](v) -- (v4) ;
\draw [thick,->] (v3)-- (v5);
\draw [thick,->] (v5)-- (v6);
\draw [thick,->] (v6)-- (vn);
\draw [thick,->] (v)-- (v6);
\draw [thick,->] (v1)-- (v);
\draw [thick,->] (v4)-- (v1);
\end{tikzpicture}
\caption{A BFZ quiver in type $A$ with extra arrows}
\label{sl4frozen}
\end{center}
\end{figure}
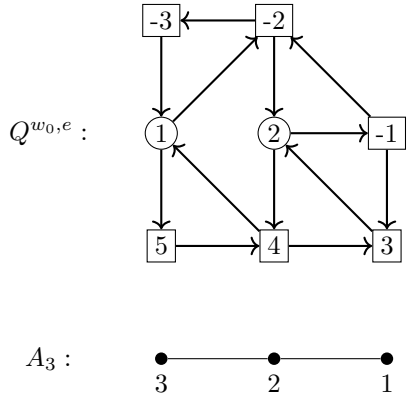

 The circular vertices are mutable and the square vertices are frozen.  The definition above does not include edges between certain frozen vertices.  Moreover edges between frozen vertices are usually not shown as they do not contribute any information to the cluster algebra.  But in this article, we will add the arrows between frozen vertices that complete simple cycles, as shown in~Figure \ref{sl4frozen}.  Note that every cycle in both of these quivers is oriented.  The quiver is drawn in such a way that the number of $s_i$'s in the words $u$ and $v$ correspond to the number of arrows of the quiver that lie directly above the vertex $i$ of the Dynkin diagram.  For example, there are two $s_3$'s in $w_0$ and $e$ together, which correspond to the two vertical arrows in the quiver that lie above the vertex 3 of $A_3$, similarly for $s_2$ and $s_1$.

A quiver for double Bruhat cells for $A_n$ can be viewed as a quiver on a plane on $A_n$ as shown in the figure above. Observe that:
\begin{itemize}
\item we have drawn the quiver such that all vertices lie on a straight line above a vertex of the Dynkin diagram.  Let us call these lines strings;
\item all vertical edges in the quiver project onto vertices in the Dynkin diagram, i.e. all vertical edges lie strictly on the strings;
\item all inclined edges project onto edges of the Dynkin diagram.  In other words, there are no edges that connect two vertices lying on non-adjacent strings.
\end{itemize}

This structure can be generalized to BFZ quivers outside of type $A$. In order to do this, we will define quivers on cylinders over Dynkin diagrams, and then show that the BFZ quivers are examples of those.

\section{Quivers on cylinders over Dynkin diagrams}

Let $\Gamma$ be a graph. A vertex of a graph is called an endpoint if it has only one edge incident to it. A vertex is called a ramification point if it has strictly more than two edges incident to it.  A path $\Gamma_{m, n}$ between two vertices $m$ and $n$ in $\Gamma$ is called a branch if both $m$ and $n$ are branching points or endpoints or if one of them is a branching point and the other is an endpoint. 
\begin{defn} The cylinder over a graph $\Gamma$ is the topological space $\Gamma \times \RR^{\geq 0}$. Let $\Gamma_0$ be the set of vertices of $\Gamma$.  The set $\Gamma_{m,n} \times \RR^{\geq 0}$ is called the sheet over the branch $\Gamma_{m,n}$.  The length of a sheet is the number of edges on the branch.  The subset $\{x\} \times \RR^{ \geq 0}$ where $x \in \Gamma_0$ is called a string.  The string over a ramification point will be called a spine.
\end{defn}

\begin{defn} \label{def:dimer} A quiver on the cylinder over a graph $\Gamma$ is called a dimer quiver on the cylinder if
\begin{enumerate} 
\item Each arrow of the quiver projects onto an edge or a vertex of $\Gamma$.
\item Each face of the quiver projects onto an edge of $\Gamma$.
\item Each face of the quiver is oriented. 
\item The first and last vertices on each string are frozen. The first and last faces on each strip have an edge that connects two frozen vertices.
\item Two faces do not share two edges unless their common string is a spine in the cylinder.
\end{enumerate}
\end{defn}

\begin{ex} 
Quivers for double Bruhat cells of $E_6$ can be drawn on a book-like structure as shown in ~Figure \ref{fig:e7dimer}. The cylinder $E_6 \times \RR^{\geq 0}$ has six strings and three sheets: two sheets of length 2 (green and red) and one sheet of length 1 (blue) glued together at their boundaries (the black string). We will call the black string, a spine of the cylinder.
\end{ex}

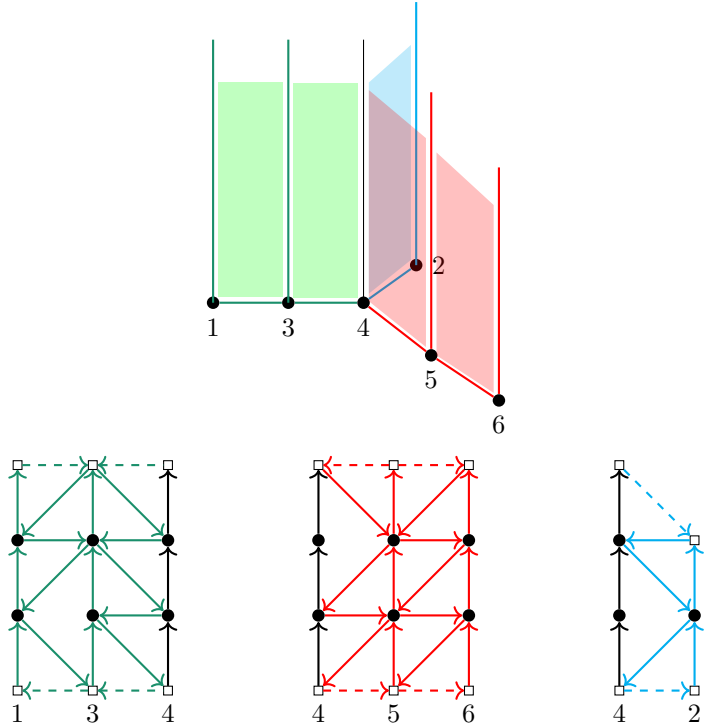
\begin{figure}[ht!]
\begin{center}
\begin{tikzpicture}
\node [fill,circle,scale=0.5,label=below:$1$] (1) at (1,0) {};
\node [fill,circle,scale=0.5,label=below:$3$] (3) at (2,0) {};
\node [fill,circle,scale=0.5,label=below:$5$] (5) at (3.9,-0.7) {};
\node [fill,circle,scale=0.5,label=right:$2$] (2) at (3.7,0.5) {};
\node [fill,circle,scale=0.5,label=below:$4$] (4) at (3,0) {};
\node [fill,circle,scale=0.5,label=below:$6$] (6) at (4.8,-1.3) {};

\draw [thick,-,jade] (1)-- (3);
\draw [thick,-,jade] (3)-- (4);
\draw [thick,-,red] (4)-- (5);
\draw [thick,-,red] (5)-- (6);
\draw [thick,-,red] (6)-- (4.8,1.8);
\draw [thick,-,cyan] (2)-- (4);
\draw  [thick,-,jade] (1,0)-- (1,3.5);
\draw  [thick,-,jade] (2,0)-- (2,3.5);
\draw (3,0)-- (3,3.5);
\draw [thick,cyan](3.7,0.5)-- (3.7,4);
\draw [thick,-,red] (5)--  (3.9,2.8);
\fill[nearly transparent,green]  (2.93,2.92) rectangle (2.07,0.07);
\fill[nearly transparent, green] (1.07,0.07) rectangle (1.93,2.93);
\fill[nearly transparent,cyan] (3.07,0.12) -- (3.07,2.93)--(3.63,3.43)--(3.63,0.57);
\fill[nearly transparent,red] (3.07,0.02) -- (3.07,2.83)--(3.83,2.19)--(3.83,-0.59);
\fill[nearly transparent,red] (3.97,-0.7) -- (3.97,2)--(4.73,1.3)--(4.73,-1.2);
\end{tikzpicture}

\medskip

\begin{tikzpicture}

\node [rectangle,draw,scale=0.5,label=below:$1$] (1) at (1,0) {};
\node [fill,circle,scale=0.5,label=below:$$] (2) at (1,1) {};
\node [fill,circle,scale=0.5,label=below:$$] (3) at (1,2) {};
\node [rectangle,draw,scale=0.5,label=below:$$] (4) at (1,3) {};
\node [rectangle,draw,scale=0.5,label=below:$3$] (5) at (2,0) {};
\node [fill,circle,scale=0.5,label=below:$$] (6) at (2,1) {};
\node [fill,circle,scale=0.5,label=below:$$] (7) at (2,2) {};
\node [rectangle,draw,scale=0.5,label=below:$$] (8) at (2,3) {};
\node [rectangle,draw,scale=0.5,label=below:$4$] (9) at (3,0) {};
\node [fill,circle,scale=0.5,label=below:$$] (10) at (3,1) {};
\node [fill,circle,scale=0.5,label=below:$$] (11) at (3,2) {};
\node [rectangle,draw,scale=0.5,label=below:$$] (12) at (3,3) {};

\draw [thick,->, jade] (1)-- (2);
\draw [thick,->, jade] (2)-- (3);
\draw [thick,->, jade] (3)-- (4);
\draw [thick,->, jade] (5)-- (6);
\draw [thick,->, jade] (6)-- (7);
\draw [thick,->, jade] (7)-- (8);
\draw [thick,->] (9)-- (10);
\draw [thick,->] (10)-- (11);
\draw [thick,->] (11)-- (12);

\draw [thick,->, jade] (2)-- (5);
\draw [thick,<-, jade] (2) --(7);
\draw [thick,->, jade] (3)-- (7);
\draw [thick,->, jade] (8)-- (3);
\draw [thick,->, dashed,jade] (5)-- (1);
\draw [thick,->, dashed,jade] (4)-- (8);

\draw [thick,->, jade] (6)--(9);
\draw [thick,->, jade] (8)--(11);
\draw [thick,->, jade] (10)--(6);
\draw [thick,->, jade] (7)--(10);
\draw [thick,->, jade] (11)--(7);
\draw [thick,<-, dashed,jade] (5)-- (9);
\draw [thick,<-, dashed,jade] (8)-- (12);

\node [rectangle,draw,scale=0.5,label=below:$4$] (13) at (5,0) {};
\node [fill,circle,scale=0.5,label=below:$$] (14) at (5,1) {};
\node [fill,circle,scale=0.5,label=below:$$] (15) at (5,2) {};
\node [rectangle,draw,scale=0.5,label=below:$$] (16) at (5,3) {};
\node [rectangle,draw,scale=0.5,label=below:$5$] (17) at (6,0) {};
\node [fill,circle,scale=0.5,label=below:$$] (18) at (6,1) {};
\node [fill,circle,scale=0.5,label=below:$$] (19) at (6,2) {};
\node [rectangle,draw,scale=0.5,label=below:$$] (20) at (6,3) {};
\node [rectangle,draw,scale=0.5,label=below:$6$] (21) at (7,0) {};
\node [fill,circle,scale=0.5,label=below:$$] (22) at (7,1) {};
\node [fill,circle,scale=0.5,label=below:$$] (23) at (7,2) {};
\node [rectangle,draw,scale=0.5,label=below:$$] (24) at (7,3) {};

\draw [thick,->] (13)-- (14);
\draw [thick,->] (14)-- (15);
\draw [thick,->] (15)--  (16);
\draw [thick,->, red] (18)--  (13);
\draw [thick,->, red] (14)-- (18);
\draw [thick,->, red] (19)-- (14);
\draw [thick,->, red] (16)-- (19);

\draw [thick,->, red] (24)--(19);
\draw [thick,<-, red] (23)--  (19);
\draw [thick,->, red] (18)--(22);
\draw [thick,<-, red] (18)--(23);
\draw [thick,->, red] (22)--(17);
\draw [thick,->, dashed,red] (20)-- (16);
\draw [thick,->, dashed,red] (13)-- (17);

\draw [thick,->, red] (17)-- (18);
\draw [thick,->, red] (18)-- (19);
\draw [thick,->, red] (19)--  (20);
\draw [thick,->, red] (21)--  (22);
\draw [thick,->, red] (22)--  (23);
\draw [thick,->, red] (23)--  (24);
\draw [thick,->, dashed,red] (20)-- (24);
\draw [thick,->, dashed,red] (17)--(21);

\node [rectangle,draw,scale=0.5,label=below:$4$] (25) at (9,0) {};
\node [fill,circle,scale=0.5,label=below:$$] (26) at (9,1) {};
\node [fill,circle,scale=0.5,label=below:$$] (27) at (9,2) {};
\node [rectangle,draw,scale=0.5,label=below:$$] (28) at (9,3) {};
\node [rectangle,draw,scale=0.5,label=below:$2$] (29) at (10,0) {};
\node [fill,circle,scale=0.5,label=below:$$] (30) at (10,1) {};
\node [rectangle,draw,scale=0.5,label=below:$$] (31) at (10,2) {};

\draw [thick,->,cyan] (29)--(30);
\draw [thick,->,cyan] (30)-- (31);
\draw [thick,->] (25)-- (26);
\draw [thick,->] (26)-- (27);
\draw [thick,->] (27)-- (28);
\draw [thick,->, cyan] (30)-- (25);
\draw [thick,->, cyan] (27)-- (30);
\draw [thick,->, cyan] (31)-- (27);

\draw [thick,->, dashed,cyan] (25)-- (29);
\draw [thick,->, dashed,cyan] (28)-- (31);

\end{tikzpicture}
 \caption{A BFZ quiver on the cylinder over $E_6$ corresponding to the Weyl element $s_1s_3s_2s_5s_4s_3s_6s_1s_5s_6s_4s_3s_2s_1s_4s_5s_6$.  The quivers lie on the corresponding colored sheets, and they share the three black arrows on the spine. The square vertices are frozen.}
\label{fig:e7dimer}
\end{center}
\end{figure}

Even though the following theorem is proved for Dynkin diagrams, it is true for any Kac-- Moody algebra and its graph.  Our examples will use Dynkin diagrams and some of the theorems in Section 4 will be proved for trees, particularly for Dynkin diagrams.

\begin{theorem} A BFZ quiver can be realized as a dimer model on the cylinder over the corresponding Dynkin diagram.
\end{theorem}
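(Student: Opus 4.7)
The plan is to write down an explicit embedding of $Q^{u,v}$ into the cylinder $\Gamma \times \RR^{\geq 0}$ and then verify, one at a time, the five axioms of Definition \ref{def:dimer}. I embed by sending each vertex $k \in \hat{\textbf{i}}$ to a point of the string $\{|i_k|\} \times \RR^{\geq 0}$ at height equal to the rank of $k$ in the subsequence of entries of $\hat{\textbf{i}}$ with absolute value $|i_k|$; under this placement, the negative indices $\{-r,\dots,-1\}$ sit at the bottom of the strings and the last occurrence of each generator sits at the top.

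Axioms (1) and (2) are essentially immediate from Definition \ref{def:bfz}. A horizontal arrow has $l=k^+$, so its two endpoints share a string and the arrow projects onto a vertex of $\Gamma$. An inclined arrow requires $a_{|i_k|,|i_l|}<0$, which forces $|i_k|$ and $|i_l|$ to be adjacent in the Dynkin diagram, so the arrow projects onto the edge between them. For the face condition, the inequalities $l<k^+<l^+$ and $l<l^+<k^+$ appearing in the definition of inclined arrows show that the boundary arrows of any face are supported on only two strings, which must be adjacent; hence each face projects onto exactly one edge of $\Gamma$.

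Axiom (3) (each face is oriented) is handled by reading off the directions of the four boundary arrows of a face from the signs $\epsilon(i_\ast)$ in Definition \ref{def:bfz}: the two cases of inclined arrows distinguished by $\epsilon(i_l)=\pm\epsilon(i_{k^+})$ ensure that the orientations of the horizontal and inclined arrows around the boundary combine into a consistent cyclic direction. Axiom (4) is bookkeeping: the frozen vertices are precisely $\{-r,\dots,-1\}$ together with the indices $k$ for which $k^+=\ell(u)+\ell(v)+1$, and in the embedding these are exactly the bottommost and topmost vertex on each string; the cycle-completing arrows pictured in Figure \ref{sl4frozen} then supply the frozen edges required by the first and last face of each strip.

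The main technical obstacle is axiom (5). To check that two faces sharing two arrows can only occur along a string over a ramification point, I would proceed as follows. Fix a non-ramification vertex $t$ of $\Gamma$ and consider the sub-word of $\hat{\textbf{i}}$ obtained by keeping only letters whose absolute value is $t$ or a neighbor of $t$; because $t$ has at most two neighbors, at most two sheets meet the string over $t$, and a local analysis of the possible interleavings of $s_t$ with its neighbors in this sub-word shows that consecutive $s_t$-letters bound at most one common horizontal arrow between faces from the two sheets. By contrast, at a ramification point three or more neighboring generators can interleave with $s_t$, and this is exactly what allows two faces on different sheets to reuse a pair of spine arrows, matching the exception built into axiom (5). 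I expect this case analysis, a local inspection of $\hat{\textbf{i}}$ around each letter $s_t$, to be the bulk of the work.
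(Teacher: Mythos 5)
Your embedding and your treatment of axioms (1), (2) and (4) track the paper's own proof: horizontal arrows lie on strings, the Cartan condition $a_{|i_k|,|i_l|}<0$ forces inclined arrows (hence faces) to sit over edges of $\Gamma$, and the frozen vertices are exactly the extremal vertices of each string. The gaps are in the other two axioms. For axiom (3) you propose to read off ``the directions of the four boundary arrows of a face,'' but faces of $Q^{u,v}$ are not quadrilaterals in general: a face with $n$ vertices has $n-1$ of them on one string and a single vertex on the adjacent string (see the Remark at the end of Section 3 and Figure \ref{nfaces}), and $n$ is unbounded. A sign check on four arrows therefore does not establish that every face is oriented, and a correct direct verification would have to handle these $n$-gons. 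The paper does not argue this way at all: it obtains axiom (3) as Proposition 3.3, by constructing $Q^{u,v}$ as a gluing of $Q^{u,e}$ and $Q^{e,v}$ along frozen vertices (Theorem \ref{Theorem:gluing}) and invoking the result of \cite{k17} that all faces of $Q^{u,e}$ are oriented; one then only checks that the two local gluing moves (identify parallel arrows, delete antiparallel ones) keep faces oriented.

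For axiom (5) you have located the right question but explicitly left it open (``I expect this case analysis \dots to be the bulk of the work''), and the mechanism that actually closes it is absent from your sketch. The paper's argument (Theorem 3.4) is a reducedness argument: if two faces shared two horizontal arrows on the string over a vertex $r$ that is not a ramification point, the reduced word would have to contain a segment $s_r\, w\, s_r$ in which $w$ uses none of $s_{r-1}$, $s_r$, $s_{r+1}$; since $s_r$ then commutes with every letter of $w$, we get $s_r\, w\, s_r = s_r s_r\, w = w$, contradicting that the word was reduced. At a spine this cancellation is blocked, because a letter indexed by a third neighbour of $r$ can separate the two occurrences of $s_r$ --- which is exactly the exception built into Definition \ref{def:dimer} and realized in the $D_4$ example of Figure \ref{2facesharing}. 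Without this cancellation step, your ``local analysis of the possible interleavings'' is a restatement of the claim rather than a proof of it.
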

\begin{proof} Notice that the horizontal edges in ~Definition \ref{def:bfz} lie on the strings of the cylinder over the Dynkin diagram.  All inclined edges lie between two adjacent strings such that they project down onto an edge of the Dynkin diagram.   In other words, there does not exist an edge between strings that are not adjacent.  This is because of the condition $a_{|i_k|, |i_j|} < 0$ on the inclined edges in ~Definition \ref{def:bfz} which assures that there is an edge between two vertices of adjacent strings only if the corresponding vertices in the graph are connected by an edge.  This implies that each face lies between two adjacent strings, satisfying the second condition in ~Definition \ref{def:dimer}.  Each face of the quiver is oriented (this will be shown in Proposition 3.3)  For the fourth condition, notice that the first and the last vertex on each string are frozen according to the definition of the BFZ quivers. 
\end{proof}

\begin{theorem}\label{Theorem:gluing}
For any $u, v \in W$, the quiver $Q^{u,v}$ can be obtained from gluing $Q^{e,v}$ on top of $Q^{u,e}$ in the following way: 
\begin{itemize}
\item On each string of the quiver, identify the bottom frozen vertex of $Q^{e,v}$ to the top frozen vertex of $Q^{u,e}$. 
\item The identified vertices are mutable vertices of $Q^{u,v}$ as they no longer are the boundary vertices. 
\item If the edges between two identified pair of vertices are directed in the same direction, then we keep one edge between them.  If the edges are not in the same direction then we delete the edges, so there is no edge between those vertices in $Q^{u,v}$. 
\end{itemize}
\end{theorem}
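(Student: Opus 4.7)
The plan is to fix a particular reduced word $\hat{\textbf{i}}$ for $(u,v)$, namely the concatenation of a reduced word for $u$ followed by one for $v$ (with opposite sign conventions on the two halves), prefixed by the initial indices $(-r, \ldots, -1)$. This partitions the position set of $\hat{\textbf{i}}$ into three consecutive blocks: the initial block $I = \{-r, \ldots, -1\}$, the $u$-block $U = \{1, \ldots, \ell(u)\}$, and the $v$-block $V = \{\ell(u)+1, \ldots, \ell(u)+\ell(v)\}$. The restrictions of $\hat{\textbf{i}}$ to $I \cup U$ and to $I \cup V$ are reduced words for $(u,e)$ and $(e,v)$ respectively (the latter after the obvious re-indexing of $V$), so that $Q^{u,e}$ naturally lives on $I \cup U$ and $Q^{e,v}$ on $I \cup V$ inside the vertex set of $Q^{u,v}$.

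The first step is the vertex identification. For each string $j$, the top frozen vertex of $Q^{u,e}$ on string $j$ is the last position of $j$ in $U$ when $j$ appears in $u$, and is $-j \in I$ otherwise; the bottom frozen vertex of $Q^{e,v}$ on string $j$ is always $-j$. Both map to the same position of $Q^{u,v}$ under the block inclusions --- the last occurrence of $j$ in $I \cup U$ --- and this common image is exchangeable in $Q^{u,v}$ precisely when $j$ appears in both $u$ and $v$, which yields the second bullet. Next I would verify the edge correspondence by a case analysis on the block locations of $k, l, k^+, l^+$. Horizontal edges $(k, k^+)$ contained in $I \cup U$ or in $I \cup V$ immediately coincide with the corresponding horizontal edges of $Q^{u,e}$ or $Q^{e,v}$, and a horizontal edge crossing from $U$ to $V$ equals the horizontal edge in $Q^{e,v}$ from $-j$ to the first occurrence of $j$ in $V$ under the identification. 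For inclined edges, a key observation is that $\epsilon(i_k)$ is constant on $U$ and on $V$ with opposite values, so applying the sign conditions $\epsilon(i_l) = \epsilon(i_{k^+})$ and $\epsilon(i_l) = -\epsilon(i_{l^+})$ of Definition \ref{def:bfz} leaves only a limited list of cross-block configurations to analyse, and each of these either matches a sub-quiver edge (after identification) or is ruled out.

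The main obstacle will be the third bullet: edges between two identified vertices, where both $Q^{u,e}$ (between the corresponding top frozens) and $Q^{e,v}$ (between the corresponding bottom frozens) may contribute. For such a pair, my plan is to compute the matrix entry $b_{kl}$ of $\widetilde{B}(\hat{\textbf{i}})$ directly from the formula in Section 2 and to compare it with the analogous entries for $Q^{u,e}$ and $Q^{e,v}$. The sign factor $\epsilon(i_p)\epsilon(i_q)$ appearing in the formula decomposes multiplicatively over the block containing $p$ and the block containing $q$, and a careful expansion shows that when the two arrow directions in the two sub-quivers agree the net result is a single arrow in the common direction, while when they disagree the contributions cancel and no arrow remains. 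This bookkeeping is the delicate part, but it reduces to a finite case analysis on the relative orderings of the last-in-$u$ and first-in-$v$ occurrences of the two strings involved; once it is settled, the theorem follows by assembling the block-wise edge matches of the previous step.
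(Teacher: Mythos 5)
Your proposal follows essentially the same route as the paper's proof: both work with the concatenated reduced word (the $u$-block followed by the $v$-block, prefixed by the initial indices), observe that $Q^{u,e}$ and $Q^{e,v}$ sit inside $Q^{u,v}$ as restrictions to the appropriate blocks, and settle the delicate third bullet --- the edges between identified junction vertices --- by a finite case analysis on the relative orderings of the last occurrences of the two adjacent letters in $u$ versus their first occurrences in $v$, checking the BFZ conditions (you via the matrix $\widetilde{B}(\textbf{i})$, the paper via the inclined-edge inequalities of Definition \ref{def:bfz}, which encode the same data). Your handling of the vertex identification, exchangeability, and within-block edge matching is in fact more explicit than the paper's, which concentrates on the junction cases and leaves the rest implicit.
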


\begin{proof} It is enough to prove this for any two neighboring strings in the cylinder.  Let $r$, $p$ be two neighboring vertices in the Dynkin diagram, i.e. $M(r,p)<0$ where $M$ is the Cartan matrix. 
\[ 
\begin{tikzpicture}
\node [fill,circle,scale=0.5,label=below:$$] (1) at (0,0) {};
\node [fill,circle,scale=0.5,label=below:$r$] (2) at (1,0) {};
\node [fill,circle,scale=0.5,label=below:$p$] (3) at (2,0) {};
\node [fill,circle,scale=0.5,label=below:$$] (4) at (3,0) {};
\node [fill,circle,scale=0.5,label=below:$$] (5) at (4,0) {};

\draw [thick,-,jade] (1)-- (3);
\draw [thick,dotted,jade] (3)-- (4);
\draw [thick,-,jade] (4)-- (5);
\draw  [thick,-,jade] (0,0)-- (0,3.5);
\draw  [thick,-,jade] (1,0)-- (1,3.5);
\draw  [thick,-,jade] (2,0)-- (2,3.5);
\draw  [thick,-,jade] (3,0)-- (3,3.5);
\draw  [thick,-,jade] (4,0)-- (4,3.5);

\fill[nearly transparent, green] (0.07,0.07) rectangle (0.93,2.93);
\fill[nearly transparent, green] (1.07,0.07) rectangle (1.93,2.92);
\fill[nearly transparent, green] (2.07,0.07) rectangle (2.93,2.93);
\fill[nearly transparent, green] (3.07,0.07) rectangle (3.93,2.93);

\end{tikzpicture}\]

Let $u$ and $v$ be two reduced Weyl group elements. Depending on the first and last positions of $s_r$ and $s_p$ in the words $u$ and $v$, there are four possible cases:
\begin{itemize}
\item $u= \_\_\_s_r\_\_\_ s_p\_\_\_$, $v=\_\_\_s_r\_\_\_ s_p\_\_\_$
\item $u= \_\_\_s_r\_\_\_ s_p\_\_\_$, $v=\_\_\_s_p\_\_\_ s_r\_\_\_$
\item $u= \_\_\_s_p\_\_\_ s_r\_\_\_$, $v=\_\_\_s_r\_\_\_ s_p\_\_\_$
\item $u= \_\_\_s_p\_\_\_ s_r\_\_\_$, $v=\_\_\_s_p\_\_\_ s_r\_\_\_$
\end{itemize}

Let us consider the first case where 
\[u= \_\_\_\underset{\substack{\uparrow \\ \text{$k_1$th}}}{s_r}\_\_\_ \underset{\substack{\uparrow \\ \text{$k_2$th}}}{s_p}\_\_\_, v= \_\_\_\underset{\substack{\uparrow \\ \text{$k_3$th}}}{s_r}\_\_\_ \underset{\substack{\uparrow \\ \text{$k_4$th}}}{s_p}\_\_\_.\]  
We know that the faces in $Q^{u,e}$ and $Q^{e,v}$ are oriented.  In this case, the vertices $k_1$ and $k_2$ are frozen vertices of $Q^{u,e}$, $l_0$ and $l_1$ are frozen vertices of $Q^{e,v}$.  

\[ \xymatrix{
 k_0 \ar[r] &  k_2\ar@{.>}[d]  & l_0 \ar[dr] & l_3 \ar[l]\\
\ar[r]  & k_1 \ar[ul] & l_1 \ar@{.>}[u]& l_2 \ar[l]
}\]

\[ \xymatrix{
 \ar[r] &  k_2 \ar[dr] & k_4 \ar[l]\\
\ar[r]  & k_1 \ar[ul] & k_3 \ar[l]
}\]


\begin{center}
$
\begin{array}{c|c|c}
k & i_k & k^{+} \\
\hline
k_1 & r & k_3    \\
k_2 & p & k_4    \\
k_3 & -r & k_3^{+} > k_4   \\
k_4 & -p & k_4^{+} \geq k_4+1
 \end{array}
 $
\end{center}
 
 As  $k_1 \leq k_2$ and  $\sgn(i_{k_2})\neq \sgn(i_{k_4})$, we need to check for the inequality $k_1< k_2 < k_2^{+} < k_1^{+}$. But the inequality is not true because $k_2^+ = k_4 > k_3 =k_1^+$. So there is no edge between $k_1$ and $k_2$ in $Q^{u,v}$.
 

For the second case where $u= \_\_\_s_r\_\_\_ s_p\_\_\_$, $v=\_\_\_s_p\_\_\_ s_r\_\_\_$: Again, the faces in $Q^{u,e}$ and $Q^{e,v}$ are oriented.  The vertices $k_1$ and $k_2$ are frozen vertices of $Q^{u,e}$, $l_0$ and $l_1$ are frozen vertices of $Q^{e,v}$. 
The following table of $k$ and $k^+$ in $Q^{u,v}$ shows that $k_1< k_2 < k_3 < k_1^{+}$, $sign(i_{k_2}) \neq sign(i_{k_3})$, we also know that $M(|i_{k_1}|, |i_{k_2}|) = M(r,p) < 0$.  Hence there exists an edge between $k_1$ and $k_2$.

\[ \xymatrix{
k_0 \ar[r] &  k_2 \ar@{.>}[d] & l_0 \ar@{.>}[d] & l_2 \ar[l]\\
\ar[r]  & k_1 \ar[ul] & l_1\ar[ur]  & l_3 \ar[l]
}\]

\[ \xymatrix{
k_0 \ar[r] &  k_2 \ar[d] & k_3 \ar[l]\\
\ar[r]  & k_1 \ar[ul] \ar[ur] & k_4 \ar[l]
}\]

\begin{center}
$
\begin{array}{c|c|c}
k & i_k & k^{+} \\
     \hline
k_1 & r & k_4    \\
k_2 & p & k_3    \\
k_3 & -p & k_3^{+} > k_4   \\
k_4 & -r & k_4^{+} 
 \end{array}
  $
\end{center}
The third and the fourth cases are similar to the first and second respectively.
\end{proof}

Let us see an example of constructing $Q^{u,v}$. The quiver is obtained by attaching the quiver $Q^{e,v}$ on top of the quiver $Q^{u,e}$. We will see this with $u=s_1s_2s_1s_3$, $v=s_2s_3s_3s_1 \in S_4$ in the following figures. 

\begin{figure}[ht!]
\centering

\begin{tikzpicture}
\node [rectangle,draw,scale=0.5,label=below:$1$,blue] (9) at (3,0) {};
\node [rectangle,draw,scale=0.5,label=below:$$,blue] (10) at (3,1) {};
\node [rectangle,draw,scale=0.5,label=below:$2$,blue] (13) at (4,0) {};
\node [fill,circle,scale=0.5,label=below:$$,blue] (14) at (4,1) {};
\node [rectangle,draw,scale=0.5,label=below:$$,blue] (15) at (4,2) {};

\draw [thick,->,blue] (10)-- (9);
\draw [thick,->,blue] (14)-- (13);
\draw [thick,->,blue] (15)-- (14);

\draw [thick,->, jade] (9)-- (15);
\draw [thick,->, dashed,jade] (13)-- (9);
\draw [thick,->, dashed,jade] (15)-- (10);

\node [rectangle,draw,scale=0.5,label=below:$3$,blue] (26) at (5,0) {};
\node [rectangle,draw,scale=0.5,label=below:$$,blue] (27) at (5,1) {};

\draw [thick,->,blue] (27)-- (26);

\draw [thick,->,red] (14)-- (27);
\draw [thick,->, red] (26)-- (14);
\draw [thick,->, dashed,red] (13)-- (26);
\draw [thick,->, dashed,red] (27)-- (15);

\end{tikzpicture}
\label{top}
\caption{$Q^{e,v}$, $v=s_2s_3s_2s_1$}
 
 \medskip

\begin{tikzpicture}

\node [rectangle,draw,scale=0.5,label=below:$1$] (9) at (3,0) {};
\node [fill,circle,scale=0.5,label=below:$$] (10) at (3,1) {};
\node [rectangle,draw,scale=0.5,label=below:$$] (11) at (3,2) {};
\node [rectangle,draw,scale=0.5,label=below:$2$] (13) at (4,0) {};
\node [rectangle,draw,scale=0.5,label=below:$$] (15) at (4,1) {};

\draw [thick,->] (9)-- (10);
\draw [thick,->] (10)-- (11);
\draw [thick,->] (13)-- (15);

\draw [thick,->, jade] (10)-- (13);
\draw [thick,->, jade] (15)-- (10);
\draw [thick,->, dashed,jade] (13)-- (9);
\draw [thick,->,dashed, jade] (11)-- (15);

\node [rectangle,draw,scale=0.5,label=below:$3$] (26) at (5,0) {};
\node [rectangle,draw,scale=0.5,label=below:$$] (27) at (5,1) {};

\draw [thick,->] (26)-- (27);

\draw [thick,->, red] (15)-- (26);
\draw [thick,->, dashed,red] (26)-- (13);
\draw [thick,->, dashed,red] (27)-- (15);

\end{tikzpicture}
\caption{$Q^{u,e}$, $u=s_1s_2s_1s_3$}

\bigskip

\begin{tikzpicture}
\node [rectangle,draw,scale=0.5,label=below:$1$] (9) at (3,0) {};
\node [fill,circle,scale=0.5,label=below:$$] (10) at (3,1) {};
\node [rectangle,draw,scale=0.5,label=below:$$] (11) at (3,2) {};
\node [rectangle,draw,scale=0.5,label=below:$$,blue] (14) at (3,3) {};
\node [rectangle,draw,scale=0.5,label=below:$2$] (13) at (4,0) {};
\node [rectangle,draw,scale=0.5,label=below:$$] (15) at (4,1) {};
\node [fill,circle,scale=0.5,label=below:$$,blue](16) at (4,2) {};
\node [rectangle,draw,scale=0.5,label=below:$$,blue] (17) at (4,3) {};

\draw [thick,->] (9)-- (10);
\draw [thick,->] (10)-- (11);
\draw [thick,->] (13)-- (15);

\draw [thick,->,blue] (14)-- (11);
\draw [thick,->,blue] (16)-- (15);
\draw [thick,->,blue] (17)-- (16);

\draw [thick,->, jade] (11)-- (17);
\draw [thick,->, dashed,jade] (17)-- (14);

\draw [thick,->, jade] (10)-- (13);
\draw [thick,->, jade] (15)-- (10);
\draw [thick,->, dashed,jade] (13)-- (9);

\node [rectangle,draw,scale=0.5,label=below:$3$] (26) at (5,0) {};
\node [rectangle,draw,scale=0.5,label=below:$$] (27) at (5,1) {};
\node [rectangle,draw,scale=0.5,label=below:$$,blue] (28) at (5,2) {};

\draw [thick,->] (26)-- (27);
\draw [thick,->,blue] (28)-- (27);

\draw [thick,->, red] (15)-- (26);
\draw [thick,->, dashed,red] (26)-- (13);

\draw [thick,->, red] (27)-- (16);
\draw [thick,->,red] (16)-- (28);
\draw [thick,->, dashed,red] (28)-- (17);

\end{tikzpicture}
\label{bottom}
\caption{$Q^{u,e}$ and $Q^{e,v}$ glued at frozen vertices}

\medskip 

\begin{tikzpicture}
\node [rectangle,draw,scale=0.5,label=below:$1$] (9) at (3,0) {};
\node [fill,circle,scale=0.5,label=below:$$] (10) at (3,1) {};
\node [fill,circle,scale=0.5,label=below:$$] (11) at (3,2) {};
\node [rectangle,draw,scale=0.5,label=below:$$] (14) at (3,3) {};
\node [rectangle,draw,scale=0.5,label=below:$2$] (13) at (4,0) {};
\node  [fill,circle,scale=0.5,label=below:$$] (15) at (4,1) {};
\node [fill,circle,scale=0.5,label=below:$$] (16) at (4,2) {};
\node [rectangle,draw,scale=0.5,label=below:$$] (17) at (4,3) {};

\draw [thick,->] (9)-- (10);
\draw [thick,->] (10)-- (11);
\draw [thick,->] (13)-- (15);

\draw [thick,->] (14)-- (11);
\draw [thick,->] (16)-- (15);
\draw [thick,->] (17)-- (16);

\draw [thick,->, jade] (11)-- (17);
\draw [thick,->, dashed,jade] (17)-- (14);

\draw [thick,->, jade] (10)-- (13);
\draw [thick,->, jade] (15)-- (10);
\draw [thick,->, dashed,jade] (13)-- (9);

\node [rectangle,draw,scale=0.5,label=below:$3$] (26) at (5,0) {};
\node [fill,circle,scale=0.5,label=below:$$] (27) at (5,1) {};
\node [rectangle,draw,scale=0.5,label=below:$$] (28) at (5,2) {};

\draw [thick,->] (26)-- (27);
\draw [thick,->] (28)-- (27);

\draw [thick,->, red] (15)-- (26);
\draw [thick,->, dashed] (26)-- (13);

\draw [thick,->, red] (27)-- (16);
\draw [thick,->,red] (16)-- (28);
\draw [thick,->, dashed,red] (28)-- (17);

\end{tikzpicture}
\label{bottom}
\caption{$Q^{u,v}$, $u=s_2s_3s_2s_1$, $v=s_2s_3s_2s_1$}

\end{figure}

\begin{lemma} A BFZ quiver $Q^{u,v}$ is planar in each sheet. 
\end{lemma}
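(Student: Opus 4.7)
A sheet $\Gamma_{m,n} \times \RR^{\geq 0}$ is a planar region subdivided by its strings into strips between consecutive strings. The plan is to first reduce planarity of the sheet to planarity of each such strip, and then to rule out crossings of inclined edges by a short combinatorial case analysis using Definition~\ref{def:bfz}. For the reduction: an inclined edge $(k,l)$ requires $a_{|i_k|,|i_l|} < 0$, so its endpoints sit on strings whose underlying vertices are adjacent in $\Gamma$; horizontal edges are confined to a single string, and since $k^+$ is by definition the next index with $|i_{k^+}|=|i_k|$, no other vertex of that string lies strictly between $k$ and $k^+$, so horizontal edges do not overlap each other and meet inclined edges only at common vertices. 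Because two distinct strips share only a string, and hence only quiver vertices, no crossing can occur between edges in different strips. Thus it suffices to show that within each strip between two adjacent strings $r$ and $p$, no two inclined edges cross.

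Fix such a strip and two inclined edges $e_i = (k_i,l_i)$ with $k_i < l_i$, $k_1 \le k_2$, and all four endpoints distinct (otherwise the edges share a vertex and do not cross in the interior); I will use the embedding in which the vertex at index $k$ sits at height $k$, so two inclined edges cross precisely when their endpoint-heights on the two strings interleave. In \emph{Case A}, $|i_{k_1}| = |i_{k_2}|$, so $k_1 < k_2$ lie on the same string and $k_1^+ \le k_2$. Both types of inclined edges in Definition~\ref{def:bfz} enforce the inequality $l < k^+$, which applied to $e_1$ gives $l_1 < k_1^+ \le k_2 < l_2$, the non-interleaving configuration $k_1 < k_2$, $l_1 < l_2$. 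In \emph{Case B}, $|i_{k_1}| \ne |i_{k_2}|$; since only the two string labels $r, p$ appear and an inclined edge requires $|i_k| \ne |i_l|$, necessarily $|i_{l_2}| = |i_{k_1}|$ and $|i_{l_1}| = |i_{k_2}|$. A crossing then requires $k_2 < l_1$, which together with $k_1 < l_2$ (automatic from $k_1 < k_2 < l_2$) forces $k_1^+ \le l_2$ and $k_2^+ \le l_1$. Applying $l < k^+$ to $e_1$ gives $l_1 < k_1^+ \le l_2$, while the same inequality applied to $e_2$ gives $l_2 < k_2^+ \le l_1$, a contradiction.

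Combining both cases, no two inclined edges in a strip cross, which together with the earlier reduction yields planarity of each sheet. The main obstacle I anticipate is setting up the right case split; once one notices that both inclined-edge types in Definition~\ref{def:bfz} share the inequality $l < k^+$, and that Case B applies it symmetrically to $e_1$ and $e_2$, the contradictions fall out immediately, so the argument does not depend on the particular embedding chosen.
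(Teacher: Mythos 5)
Your proof is correct and takes essentially the same approach as the paper: both arguments reduce planarity to showing that two inclined edges between a pair of adjacent strings cannot cross, and both rule this out using the order constraints of Definition~\ref{def:bfz}, in particular that an inclined edge between $k<l$ forces $l<k^+$. The paper organizes the contradiction around the fixed configuration of edges $(k^+,l)$ and $(k,l^{m+})$, deriving $k^+<l^+$ and $l^+<k^+$, whereas your Case A/Case B split by string membership of the smaller endpoints is a mild (and if anything slightly more exhaustive) repackaging of the same idea.
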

\begin{proof}
Consider the $k$th and the $l$th string of the quiver.  If the strings are not adjacent on a sheet, then we know that there cannot be edges between the vertices of the strings.  If the strings are adjacent, consider the following diagram:
\begin{center}
$\begin{tikzcd}
k \ar{r} & k^{+} \ar{dl}\ar{r} & k^{++} \ar{r} & \cdots \ar{r} & k^{p+}\\ 
l \ar{r}  & l^{+} \ar{r} & l^{++} \ar{r} & \cdots \ar{r} & l^{r+} 
\end{tikzcd}$
\end{center}

Suppose the vertices $l$ and $k^{+}$ are connected.  Then depending on whether $k^{+}< l$ or $l< k^{+}$, there will be the following inequalities:\\
(1) If  $k^{+}< l$, then  $ l<k^{++}<l^{+}$ \\
(2) If  $l< k^{+}$, then $k^{+}< l^{+}<k^{++}$. \\
(3) So in both cases above, $k^+ < l^+$. \\
 We want to show that the vertex $k$ is not connected to $l^{m+}$ for any $m$. Suppose $k$ and $l^{m+}$ are connected.  Then again, there are two cases: \\
(4) If  $l^{m+}<k $, then  $ k<l^{(m+1)+}<k^{+}$ \\
(5) If  $k< l^{m+}$, then $l^{m+}< k^{+}<l^{(m+1)+}$. \\
(6) Combining inequalities in (4) and (5) with $l^+<l^{m+}$ we get, $l^+ <k^+$. \\
As (3) and (6) contradict each other, there cannot be an overlapping edge.  Hence the quiver is planar in each sheet.
\end{proof}

\noindent Recall the following theorem from \cite{k17}:
\begin{lemma} For any Kac--Moody algebra $\mathfrak{g}$ and $(u,e) \in W \times W$, all faces of $Q^{u,e}$ are oriented, where $e$ is the identity element in $W$. 
\end{lemma}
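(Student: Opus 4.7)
The plan is to determine every arrow direction in $Q^{u,e}$ directly and then verify face orientation, exploiting the fact that when $v=e$ the sign function $\epsilon$ is essentially constant on $\hat{\mathbf{i}}$: every non-prefix index $k\in\{1,\ldots,\ell(u)\}$ carries $\epsilon(i_k)=+1$, while every prefix index carries $\epsilon(i_k)=-1$ (the convention of the worked example after Definition~\ref{def:bfz}). By the dimer-realization theorem above, each face of $Q^{u,e}$ projects onto a single Dynkin edge, so it suffices to fix two adjacent nodes $r,p$ with $a_{rp}<0$ and analyze the faces between the corresponding strings.

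I would first pin down all arrow directions. Every horizontal arrow $k\to k^+$ has $k^+$ non-prefix (each label occurs once in the prefix), so $\epsilon(i_{k^+})=+1$ and the arrow points downward. For inclined arrows, Case~2 of Definition~\ref{def:bfz} requires the sign switch $\epsilon(i_l)=-\epsilon(i_{l^+})$, which in $\hat{\mathbf{i}}$ can happen only with $l$ a prefix index and $l^+$ its first positive occurrence; together with $k<l$ this forces $k$ to be prefix as well, so Case~2 inclined arrows live only between the two prefix vertices $-r$ and $-p$ at the top, and exist precisely when the first positive occurrence of $|i_l|$ precedes that of $|i_k|$ in $u$. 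All remaining inclined arrows come from Case~1 with $l$ non-prefix; the condition $\epsilon(i_l)=\epsilon(i_{k^+})$ is automatic (both equal $+1$, since $k^+$ is always non-prefix), and the direction rule forces $l\to k$. Hence every non-top inclined arrow points upward, from the later-appearing vertex back to the earlier one.

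With directions fixed, I would enumerate the minimal faces by reading the interleaved sequence of non-prefix positions on the two strings. Each interior face is bounded by downward horizontal chains on each string and two upward inclined arrows at the turning points where the interleaving switches sides; by the previous step the directions match consistently, so the boundary is an oriented cycle: a triangle when only a single vertex of one string is wedged between two consecutive vertices of the other, a quadrilateral (or longer polygon decomposing into minimal oriented pieces) when several are wedged in. The top face involves the two prefix vertices $-r,-p$ together with the first non-prefix vertex on the appropriate string, closed either by the BFZ Case~2 arrow between $-r$ and $-p$ or, when Case~2 fails, by the added frozen-to-frozen arrow whose direction is chosen to complete the cycle; in either subcase the triangle is oriented. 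The bottom face is handled symmetrically via the last non-prefix occurrences and the added bottom frozen-to-frozen arrow.

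\textbf{Main obstacle.} The only delicate point is the top boundary, where one must split into the two subcases (Case~2 present or absent) based on the relative order of the first positive occurrences of $r$ and $p$ in $u$, and verify in each subcase that the closing arrow orients the top triangle correctly. Once this case check is in hand, the interior argument is completely uniform because the direction rules of Step~1 apply identically everywhere, and the proof extends to any Kac--Moody algebra since it uses only the condition $a_{rp}<0$ between adjacent Dynkin nodes.
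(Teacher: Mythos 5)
The first thing to note is that the paper never proves this lemma itself: it is quoted from \cite{k17} (``Recall the following theorem from \cite{k17}''), so there is no internal proof to compare your argument against; the closest in-paper material is the unproved remark at the end of Section 3, which asserts that a face with $n$ vertices has $n-1$ of them on one string and the remaining one on the adjacent string. Within that caveat, your Step 1 is correct and is the right computational core, consistent with the conventions of the paper's worked example: every horizontal arrow goes $k\to k^+$, the Case 2 sign condition forces both endpoints to be prefix vertices, and every remaining inclined arrow is a Case 1 arrow directed from the later vertex back to the earlier one. Your treatment of the top face is also sound, including the check that when the Case 2 condition holds, the arrow it prescribes is exactly the one that orients that face.

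The gap is in the sentence ``by the previous step the directions match consistently, so the boundary is an oriented cycle.'' That inference is invalid, and the configuration you literally describe --- ``downward horizontal chains on \emph{each} string and two upward inclined arrows'' --- is precisely the one that must be ruled out: if a face had two or more vertices on each of its two strings, its boundary would carry downward arrows on both sides, and traversing it would necessarily run against the horizontal arrows on one side, so such a face can never be oriented, whatever the inclined arrows do. Thus, given your direction rules, the lemma is \emph{equivalent} to the structural fact that every bounded face is a downward chain on one string closed off by a single vertex of the adjacent string (the paper's remark), and this is what needs proof rather than an enumeration plan. It is fillable with what you already have: the Case 1 condition $k<l<k^+<l^+$ in Definition~\ref{def:bfz} says exactly that $k$ is the last occurrence of its letter before $l$ and $l$ is the last occurrence of its letter before $k^+$; hence between two adjacent strings there is exactly one inclined arrow per switch of the shuffled word, from the last vertex of each maximal block to the last vertex of the preceding block (the bottom-most switch, between the two final frozen vertices, fails the strict inequality and is supplied instead by the paper's added frozen arrow). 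With that list of inclined arrows, every face is forced into the chain-plus-one-vertex shape and your orientation check goes through. A smaller wrinkle to repair: since Definition~\ref{def:bfz} only connects pairs with at least one exchangeable vertex, your ``Case 2 arrows'' (both endpoints frozen prefix vertices) are not BFZ arrows at all; they, like the bottom closures, are among the frozen-to-frozen arrows the paper adds by hand, directed so as to close oriented cycles.
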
 

\begin{proposition} All faces of the quiver $Q^{u,v}$ are oriented.
\end{proposition}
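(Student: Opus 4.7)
My plan is to reduce the statement to the single-word case via Theorem \ref{Theorem:gluing}. The preceding Lemma cited from \cite{k17} gives that all faces of $Q^{u,e}$ are oriented for any $u$. Applying the same Lemma with $v$ in place of $u$ shows that $Q^{v,e}$ has oriented faces; since $Q^{e,v}$ is obtained from $Q^{v,e}$ by reversing every arrow (equivalently, by flipping the signs $\epsilon(i_k)$ in Definition \ref{def:bfz}), and arrow reversal preserves the property of being an oriented cycle, all faces of $Q^{e,v}$ are also oriented.

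I would then classify the faces of $Q^{u,v}$ according to how they meet the gluing line. Faces that lie strictly inside the $Q^{u,e}$ half, or strictly inside the $Q^{e,v}$ half, are inherited unchanged from one of the two pieces and are therefore oriented. The remaining faces touch the gluing line, and these I would analyze using the four-case decomposition from the proof of Theorem \ref{Theorem:gluing}, applied to each pair of adjacent strings. In the cases where the relative order of the relevant reflections in $u$ and $v$ forces the inclined edges between identified frozen-vertex pairs to survive the gluing, the top face of $Q^{u,e}$ and the bottom face of $Q^{e,v}$ remain as separate faces of $Q^{u,v}$, retaining the orientations they already had before gluing.

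In the opposite cases the two inclined edges cancel, and the top face of $Q^{u,e}$ fuses with the bottom face of $Q^{e,v}$ across the identified vertices into a single larger face. I would verify orientedness of the merged cycle by tracing its boundary: the lower arc is the oriented boundary of the $Q^{u,e}$ face with the cancelled inclined edge removed, and the upper arc is the oriented boundary of the $Q^{e,v}$ face with its cancelled edge removed. Using the sign and position data tabulated in the proof of Theorem \ref{Theorem:gluing}, the two arcs join head-to-tail at the identified vertices to yield a consistently oriented cycle. The main obstacle will be precisely this last bookkeeping step: one must check in each of the four cases that the orientations inherited from the two halves actually agree at the stitching vertices. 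Since the arrow directions at the gluing line are completely determined by the signs $\epsilon(i_k)$ of Definition \ref{def:bfz}, however, the verification reduces to a finite, tabulated check that mirrors the one already carried out in the proof of Theorem \ref{Theorem:gluing}.
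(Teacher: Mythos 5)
Your proposal is correct and follows essentially the same route as the paper: both reduce to the gluing construction of Theorem \ref{Theorem:gluing}, note that $Q^{u,e}$ and $Q^{e,v}$ have oriented faces, and then split into the case where the identified edges survive (the two faces stay separate and keep their orientations) and the case where oppositely directed edges cancel (the two faces fuse into one). The only difference is that your final ``bookkeeping'' step is simpler than you anticipate: since the cancelled edges point in opposite directions and each original face was oriented, the two remaining arcs necessarily join head-to-tail, so no sign-table verification is needed.
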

\begin{proof} We know from Theorem~\ref{Theorem:gluing} that the quiver $Q^{u,v}$ can be obtained from gluing $Q^{e,v}$ on top of $Q^{u,e}$.  All faces in the quivers $Q^{e,v}$ and $Q^{u,e}$ are oriented.  We need to show that the gluing of two quivers also gives oriented faces.  
\begin{itemize}
\item Case 1.  Suppose the edges between two identified vertices are directed in the same direction.  In this case, we identify the two edges.  In Figure~2, $e$ is the identified edge, $k$ and $l$ are the identified vertices. The paths $p_1$ and $p_2$ are the parts of the oriented faces that contain the edge $e$ in $Q^{u,e}$ and $Q^{e,v}$ respectively.  As shown in the figure, after gluing the vertices and the edge, the resulting faces in $Q^{u,v}$ are still oriented. 

\begin{figure}[ht!] \label{case1}
\centering
\begin{tikzpicture}
\node [fill,circle,scale=0.5,label=left:$k$] (en) at (0,0) {};
\node [fill,circle,scale=0.5,label=right:$l$] (e1) at (2,0) {};
\draw [thick,red, ->] (en)-- node[above]{$e$} (e1);
\draw [thick,dashed, <<-](en) arc [radius=1, start angle=180, end angle=50] node[right]{$p_2$}arc [radius=1, start angle=50, end angle=0] ;

\node [fill,circle,scale=0.5,label=left:$k$] (k) at (0,-1) {};
\node [fill,circle,scale=0.5,label=right:$l$] (l) at (2,-1) {};
\draw [thick,red, ->] (k)-- node[below]{$e$} (l);
\draw [thick, ->>, dashed](l) arc [radius=1, start angle=0, end angle=-30] node[right]{$p_1$} arc [radius=1, start angle=-30, end angle=-180];

\node [fill,circle,scale=0.5,label=left:$k$] (k) at (5,-0.5) {};
\node [fill,circle,scale=0.5,label=right:$l$] (l) at (7,-0.5) {};
\draw [thick,red, ->] (k)-- node[below]{$e$} (l);
\draw [thick, ->>, dashed](l) arc [radius=1, start angle=0, end angle=-30] node[right]{$p_1$} arc [radius=1, start angle=-30, end angle=-180];
\draw [thick,dashed, <<-](k) arc [radius=1, start angle=180, end angle=50] node[right]{$p_2$}arc [radius=1, start angle=50, end angle=0] ;
\end{tikzpicture}
\caption {Case 1}
\end{figure}
\item Case 2. Suppose the edges between two identified vertices are directed in opposite directions.  In this case, we delete the edges.  In Figure~3, $k$ and $l$ are the identified vertices. As the two red edges between $k$ and $l$ are oppositely oriented, there is no edge in the glued diagram.  This still results into an oriented face $F$ in the quiver $Q^{u,v}$ as shown in the figure. 
\end{itemize}

\begin{figure}[ht!]
\centering
\begin{tikzpicture}
\node [fill,circle,scale=0.5,label=left:$k$] (en) at (0,0) {};
\node [fill,circle,scale=0.5,label=right:$l$] (e1) at (2,0) {};
\draw [thick,red, <-] (en)-- node[above]{$e$} (e1);
\draw [thick,dashed, ->>](en) arc [radius=1, start angle=180, end angle=50] node[right]{$p_2$}arc [radius=1, start angle=50, end angle=0] ;

\node [fill,circle,scale=0.5,label=left:$k$] (k) at (0,-1) {};
\node [fill,circle,scale=0.5,label=right:$l$] (l) at (2,-1) {};
\draw [thick,red, ->] (k)-- node[below]{$e$} (l);
\draw [thick, ->>, dashed](l) arc [radius=1, start angle=0, end angle=-30] node[right]{$p_1$} arc [radius=1, start angle=-30, end angle=-180];

\node [fill,circle,scale=0.5,label=left:$k$] (k) at (5,-0.5) {};
\node [fill,circle,scale=0.5,label=right:$l$] (l) at (7,-0.5) {};

\draw [thick, ->>, dashed](l) arc [radius=1, start angle=0, end angle=-30] node[right]{$p_1$} arc [radius=1, start angle=-30, end angle=-180];
\draw [thick,dashed, ->>](k) arc [radius=1, start angle=180, end angle=50] node[right]{$p_2$}arc [radius=1, start angle=50, end angle=0] ;
\node at(6,-0.5) {$F$};
\end{tikzpicture}
\caption {Case 2}
\label{diag:case2} 
\end{figure}

\end{proof}

\begin{theorem} The faces of the quiver on each sheet share at most one edge with each other. 
\end{theorem}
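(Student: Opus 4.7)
The plan is to combine the planarity of $Q^{u,v}$ restricted to a single sheet (Lemma 3.2) with the oriented face structure (Proposition 3.3), and to exploit the bipartite-strip layout of vertices on a sheet. Fix a sheet over a branch joining two adjacent graph vertices $a$ and $b$; on this sheet the quiver consists of two columns of vertices (one over $a$, one over $b$), horizontal edges within each column, and inclined edges between columns. As a first step I would analyze how consecutive inclined edges interlace the vertices on the two strings, using conditions (1)--(2) of Definition \ref{def:bfz}, to show that every face of the sheet is a short oriented cycle alternating between horizontal and inclined edges, with exactly one horizontal edge per string (so a 4-cycle for faces arising directly from Definition \ref{def:bfz}, and at most a 6-cycle for merged faces produced by the gluing in Theorem \ref{Theorem:gluing}).

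Assuming this structure, I would argue by contradiction. Suppose two distinct faces $F_1, F_2$ on the sheet share two edges $e_1, e_2$, and split into three cases based on their types. In the case where both $e_1, e_2$ are horizontal: if they lie on the same string, each face would need to use both as its unique $a$- or $b$-horizontal edge, impossible; if they lie on different strings, then they serve as the top and bottom horizontal edges of each face and planarity reconstructs the remaining inclined edges uniquely, forcing $F_1 = F_2$. In the case where both $e_1, e_2$ are inclined: together they and the planarity constraint on the strip bound a unique region, so $F_1 = F_2$. In the mixed case, $e_1$ and $e_2$ meet at a common vertex, and the orientation of each face determines a unique continuation into a short cycle, again forcing $F_1 = F_2$.

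The main obstacle is the case where both shared edges are inclined, since inclined edges can in principle ``skip'' intermediate vertices on a string (whenever $k^+$ is not the immediate successor on the opposite string). Ruling this out requires the interlacing constraints on the indices $k, l, k^+, l^+$ from Definition \ref{def:bfz} together with planarity: an edge between $k$ and some far-away $l$ on the other string would block any parallel inclined edge between nearer vertices, so two distinct faces cannot simultaneously be bounded by two inclined edges of the same configuration. To make the argument modular, I would use Theorem \ref{Theorem:gluing} to reduce to $Q^{u,e}$ (where the analogous statement follows from the analysis in \cite{k17}) and then verify that gluing only either identifies compatible edges or removes opposing ones, and in either case cannot produce a new pair of distinct faces sharing two edges on the same sheet.
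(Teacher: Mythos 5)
Your proposal has a genuine gap, and it sits exactly where the content of the theorem lies. The structural claim on which your whole case analysis rests --- that every face on a sheet is a $4$-cycle (at most a $6$-cycle after gluing), alternating horizontal and inclined edges, with exactly one horizontal edge per string --- is false. A face of $Q^{u,v}$ has $n-1$ of its vertices on one string and a single vertex on the adjacent string (see the remark illustrated in Figure~\ref{nfaces}), so it has exactly two inclined edges and may contain an arbitrarily long run of horizontal edges; for instance, the red sheet in Figure~\ref{2facesharing} has a five-sided face with four vertices on string $3$ and three consecutive horizontal edges. This destroys your treatment of the case in which both shared edges are horizontal and lie on the same string (``each face would need to use both as its unique horizontal edge, impossible''), and that case is the only substantive one: an inclined edge projects onto a graph edge, so two faces sharing an inclined edge lie over the same graph edge, whereas two faces sharing a horizontal edge lie on opposite sides of its string and hence over distinct graph edges; the mixed and doubly-inclined cases therefore die for projection and planarity reasons, and everything reduces to two horizontal edges on the common string.

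Moreover, no argument built only from planarity, orientation and face shape can close that remaining case, because the configuration you are trying to exclude genuinely occurs: on a spine two faces do share two horizontal edges (Figure~\ref{2facesharing}), even though every sheet there is planar and every face oriented. The only thing distinguishing an interior string of a sheet from a spine is the number of neighbors of the underlying graph vertex, and the only mechanism through which that enters is the reduced word --- which your proposal never uses. The paper's proof is precisely this word argument: if two faces shared two edges on string $r$, the word would contain a subword $s_r\,w'\,s_r$ in which $w'$ involves no $s_r$ and no generator adjacent to $r$; when $r$ is interior to a branch its only neighbors are $r\pm1$, so every letter of $w'$ commutes with $s_r$, giving $s_r w' s_r = w'$ and contradicting reducedness. (On a spine a third neighbor $s_j$ can occur in $w'$ and block the cancellation, which is exactly why the statement is sheet-local.) Your fallback of reducing to $Q^{u,e}$ via Theorem~\ref{Theorem:gluing} and quoting \cite{k17} does not repair this: when the gluing deletes a pair of opposite edges it merges two faces into one, so you would still have to exclude that two merged faces, straddling the gluing line on opposite sides of a string, share the horizontal edges immediately above and below the identified vertex --- and that again needs the word argument, not merely the observation that gluing ``identifies compatible edges or removes opposing ones.''
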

\begin{proof} Let us consider the following part of the Dynkin diagram.  Suppose the two faces share two edges on the rth string. Let $u \in W$ be in its reduced form, then $u$ has the following form, where each of the spaces do not contain $s_{r-1}, s_r$ or$s_{r+1}$.  \[u= \underline{\hspace{0.3cm}1st\hspace{0.3cm}}s_{r+1} \underline{\hspace{0.3cm}2nd\hspace{0.3cm}} s_r   \underline{\hspace{0.3cm}3rd\hspace{0.3cm}} s_{r-1}  \underline{\hspace{0.3cm}4th\hspace{0.3cm}} s_r  \underline{\hspace{0.3cm}5th\hspace{0.3cm}} s_r  \underline{\hspace{0.3cm}6th\hspace{0.3cm}} s_{r-1}  \underline{\hspace{0.3cm}7th\hspace{0.3cm}} s_{r+1}  \underline{\hspace{0.3cm}8th\hspace{0.3cm}}.\]
As the $5th$ space does not contain $s_{r-1}, s_r$ or $s_{r+1}$, \[ s_r  \underline{\hspace{0.35cm}5th\hspace{0.35cm}} s_r= s_r s_r \underline{\hspace{0.35cm}5th\hspace{0.35cm}}= \underline{\hspace{0.35cm}5th\hspace{0.35cm}} \] which implies that $u$ was not in its reduced form.  Therefore, it is not possible for two such faces to share two edges.

\end{proof}

\begin{ex} Two faces of $Q^{u,v}$ can share two edges only on the spine of the cylinder. For example consider the Dynkin diagram $D_4$. Let $u=s_4s_3s_1s_3s_2s_3s_1s_4$.  Then~Figure \ref{2facesharing} shows the quiver $Q^{u,e}$ where two faces share two edges on the spine. 

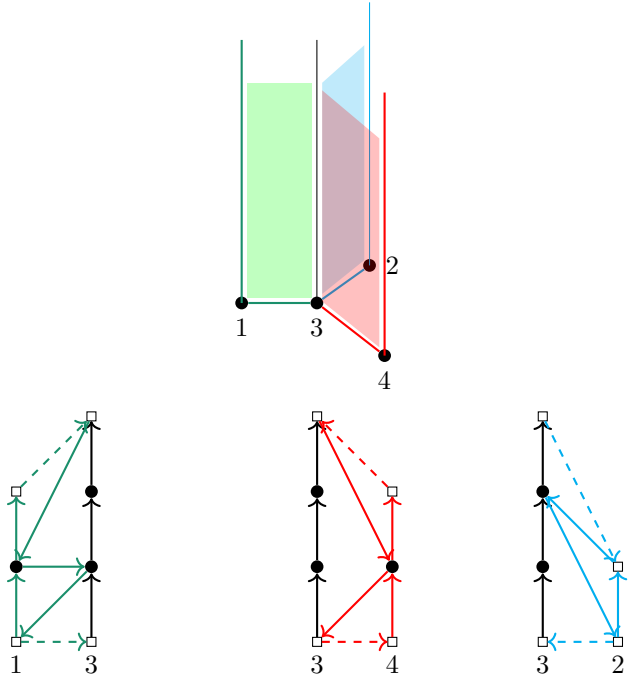
\begin{figure}[ht!]
\centering

\begin{tikzpicture}
\node [fill,circle,scale=0.5,label=below:$1$] (1) at (2,0) {};
\node [fill,circle,scale=0.5,label=below:$4$] (4) at (3.9,-0.7) {};
\node [fill,circle,scale=0.5,label=right:$2$] (3) at (3.7,0.5) {};
\node [fill,circle,scale=0.5,label=below:$3$] (2) at (3,0) {};


\draw [thick,-,cyan] (3)-- (2);
\draw [thick,-,jade] (2)-- (1);
\draw [thick,-,red] (2)-- (4);

\draw  [thick,-,jade] (2,0)-- (2,3.5);
\draw (3,0)-- (3,3.5);
\draw [cyan](3.7,0.5)-- (3.7,4);
\draw [thick,-,red] (3.9,-0.7)-- (3.9,2.8);

\fill[nearly transparent,green]  (2.93,2.92) rectangle (2.07,0.07);

\fill[nearly transparent,cyan] (3.07,0.12) -- (3.07,2.93)--(3.63,3.43)--(3.63,0.57);
\fill[nearly transparent,red] (3.07,0.02) -- (3.07,2.83)--(3.83,2.19)--(3.83,-0.59);
\end{tikzpicture}

\medskip
\centering

\begin{tikzpicture}
\node [rectangle,draw,scale=0.5,label=below:$1$] (9) at (2,0) {};
\node [fill,circle,scale=0.5,label=below:$$] (10) at (2,1) {};
\node [rectangle,draw,scale=0.5,label=below:$$] (11) at (2,2) {};
\node [rectangle,draw,scale=0.5,label=below:$3$] (13) at (3,0) {};
\node [rectangle,draw,scale=0.5,label=below:$$] (15) at (3,3) {};
\node [fill,circle,scale=0.5,label=below:$$] (16) at (3,2) {};
\node [fill,circle,scale=0.5,label=below:$$] (17) at (3,1) {};

\draw [thick,->, jade] (9)-- (10);
\draw [thick,->, jade] (10)-- (11);
\draw [thick,->] (13)-- (17);
\draw [thick,->] (16)-- (15);
\draw [thick,->] (17)-- (16);

\draw [thick,->, jade] (17)-- (9);
\draw [thick,->, jade] (10)-- (17);
\draw [thick,->, jade] (15)-- (10);
\draw [thick,->, dashed,jade] (9)-- (13);
\draw [thick,->,dashed, jade] (11)-- (15);

\node [rectangle,draw,scale=0.5,label=below:$3$] (22) at (6,0) {};
\node [fill,circle,scale=0.5,label=below:$$] (23) at (6,1) {};
\node [fill,circle,scale=0.5,label=below:$$] (24) at (6,2) {};
\node [rectangle,draw,scale=0.5,label=below:$$] (25) at (6,3) {};
\node [rectangle,draw,scale=0.5,label=below:$4$] (26) at (7,0) {};
\node [fill,circle,scale=0.5,label=below:$$] (27) at (7,1) {};
\node [rectangle,draw,scale=0.5,label=below:$$] (28) at (7,2) {};

\draw [thick,->] (23)-- (24);
\draw [thick,->] (22)-- (23);
\draw [thick,->] (24)-- (25);
\draw [thick,->,red] (27)-- (28);
\draw [thick,->, red] (26)-- (27);

\draw [thick,->, red] (25)-- (27);
\draw [thick,->, red] (27)-- (22);
\draw [thick,->, dashed,red] (22)-- (26);
\draw [thick,->, dashed,red] (28)-- (25);

\node [rectangle,draw,scale=0.5,label=below:$3$] (30) at (9,0) {};
\node [fill,circle,scale=0.5,label=below:$$] (31) at (9,1) {};
\node [fill,circle,scale=0.5,label=below:$$] (32) at (9,2) {};
\node [rectangle,draw,scale=0.5,label=below:$$] (33) at (9,3) {};
\node [rectangle,draw,scale=0.5,label=below:$2$] (34) at (10,0) {};
\node [rectangle,draw,scale=0.5,label=below:$$] (35) at (10,1) {};

\draw [thick,->] (32)-- (33);
\draw [thick,->] (31)-- (32);
\draw [thick,->] (30)-- (31);
\draw [thick,->, cyan] (34)-- (35);

\draw [thick,->, cyan] (32)-- (34);
\draw [thick,->, cyan] (35)-- (32);
\draw [thick,->, dashed,cyan] (34)-- (30);
\draw [thick,->, dashed,cyan] (33)-- (35);

\end{tikzpicture}
\caption{The quiver $Q^{u,e}$ for $u=s_4s_3s_1s_3s_2s_3s_1s_4$ in $D_4$ showing that the top two edges on String 3 are shared by two faces of the quiver.}
\label{2facesharing}
\end{figure}
\end{ex}

\begin{defn} We say that a quiver can be drawn on the cylinder $\Gamma \times \RR^{\geq0}$ if  the vertices of the quiver are on the grid $\Gamma_0 \times \NN$ and edges lie on the sheets of the cylinder. 
\end{defn}
\begin{enumerate}
\item Draw a vertical line on each vertex of the graph. We will call them strings
\item Place jth vertex of the BFZ quiver over the vertex $i_j$ of the Dynkin diagram at height $j$.
\item Connect the vertices with arrows as in BFZ
	\item All vertices of the quiver lie on the strings. Each adjacent pair of vertices on a string is connected by vertical edges of the quiver. The vertical egdes and vertices project down on the corresponding vertex of the graph.
	\item Other inclined edges between the vertices of the quiver project down on the edges of the graph.
\end{enumerate}

\begin{remark} In $Q^{u,v}$, a face with $n$ vertices has $n-1$ vertices on one string and the remaining one vertex on its adjacent string. The second diagram in~Figure \ref{nfaces} shows a cycle that does not appear in $Q^{u,v}$.
\begin{figure}
\begin{center}
\begin{tikzcd}
j \ar{r} & j^{+} \ar{r} & j^{++} \ar{r} & \cdots \ar{r} & j^{(n-2)+} \ar{dll}\\ 
 &  & k \ar{ull} 
\end{tikzcd}

\begin{tikzcd}
j \ar{r} & j^{+} \ar{r} & j^{++} \ar{r} & \cdots \ar{r} & j^{p+} \ar{dl}\\ 
 &  & k  \ar{ull} & k^{+}  \ar{l} 
\end{tikzcd}
\end{center} 
\caption{Types of faces in a dimer model}
\label{nfaces}
\end{figure}
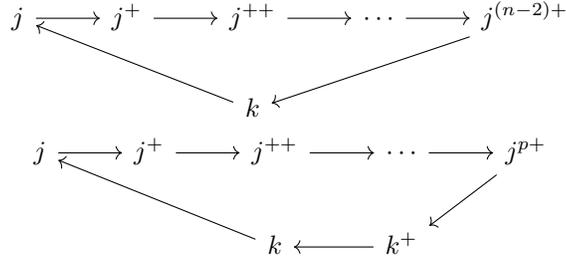

\end{remark} 

\section{Rigidity of the superpotential}
In this section, we will show that the superpotential of $Q^{u.v}$ is rigid.  The potential  \[S= \sum{\text{clockwise oriented faces}}-\sum{\text{anti-clockwise oriented faces}}\] is called the superpotential of the quiver $Q$.  From \cite{birs09} and \cite{k17}, we know that the superpotential of $Q^{u,e}$ is rigid for all symmetric Kac-Moody algebras.  Now we will use construction of $Q^{u,v}$ in Theorem~3.2 to prove the rigidity in general. 

The technique of drawing quivers on the cylinders allows us to look at the quiver in its planar parts.  Consider the subquiver of $Q^{u,v}$ consisting of all vertices and edges lying on a sheet of the cylinder.  The subquiver on each sheet of the cylinder has a superpotential.  The superpotential of $Q^{u,v}$ is just the sum of all such smaller superpotentials on each sheet. 

Let us recall some definitions.  A potential $S \in \CC(Q)$  is a linear combination of cycles in the quiver. The pair $(Q,S)$ of a quiver and its potential is called a quiver with potential or a QP.  Note that all cycles in a potential are simple.  This means that no cycle passes through the same vertex twice.  We will follow the definition of  mutation of quivers with potential in \cite{dwz08}. 
 \begin{itemize}\item Two potentials $S$ and $S'$ on Q are cyclically equivalent if $S-S'$ lies in the closure of the vector subspace spanned by all the elements of the form $\alpha_1 \ldots \alpha_l - \alpha_2 \ldots \alpha_l \alpha_1$ where $\alpha_1 \ldots \alpha_l $ is a cycle of positive length.  
 \item Two quivers with potentials $(Q,S)$ and $(Q',S')$ are right equivalent to each other if there exists an isomorphism $\phi: \CC(Q) \to \CC(Q')$ such that $\phi(S)$ is cyclically equivalent to $S'$.
 \item Let us define the cyclic derivative of a potential.  For every $a \in Q_1$, the cyclic derivative $\partial_a$ is defined as: 
\[ \partial_a(a_1a_2\cdots a_n)= \sum_{i} \delta_{a a_i}a_{i+1}\cdots a_na_1\cdots a_{i-1},\]  where $a_1a_2\cdots a_n$ is a cycle in the quiver and  $a=a_i$.  If $a \neq a_i$ for any $i$, then $\partial_a(a_1a_2\cdots a_n)=  0$.
\item If $S$ is a potential of $Q$, we define the Jacobian ideal $J(S)$ to be the ideal generated by $\partial_a(S)$, for all $a \in Q$. 
\item The Jacobian algebra $P(Q,S)$ is the quotient $\CC(Q)/J(S)$. 
 \item A QP is called trivial if it is a sum of cycles of length 2, and the derivatives span $Q$ as a $\CC$-vector space. 
 \item A QP is called reduced if the degree-2 component of $S$ is 0, i.e., if the expression of $S$ involves no 2-cycles.
 \end{itemize}

\begin{defn} A QP $(Q,S)$ is rigid if every cycle in Q is cyclically equivalent to an element of the Jacobian ideal $J(S)$.
\end{defn}

In order to show that $S$ is rigid in $Q^{u,v}$, we use the following result from \cite{k17}: 
\begin{theorem}The superpotential $S$ of the quiver $Q^{u,e}$ is a rigid potential. 
\end{theorem}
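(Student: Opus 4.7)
My plan is to use the dimer model structure of $Q^{u,e}$ on the cylinder $\Gamma \times \RR^{\geq 0}$ over the Dynkin diagram $\Gamma$, established in Theorem 3.2, to reduce the rigidity of $S$ to a local computation on each sheet. Since $\Gamma$ is a tree, the cylinder is simply connected, and every face of $Q^{u,e}$ lies entirely in a single sheet (each face projects onto an edge of $\Gamma$). Combined with the planarity of the subquiver on each sheet (Lemma 3.1), every cycle in the quiver bounds a planar region built from faces of a single sheet, so it suffices to show that (i) each face cycle $C_f$ is cyclically equivalent to an element of the Jacobian ideal $J(S)$, and (ii) any cycle can be deformed into a combination of face cycles modulo cyclic equivalence and $J(S)$.

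For (i), I would propagate rigidity inward from the boundary of each sheet. A face $f$ containing an arrow $a$ between two consecutive frozen vertices at the top or bottom of the sheet has $a$ lying on no other face; hence $\partial_a S$ equals, up to sign, the complementary path $p$ of $a$ in the cycle $C_f$. Thus $p \in J(S)$ and $C_f = \pm a \cdot p \in J(S)$ since $J(S)$ is a two-sided ideal. For an interior face $f$ sharing an arrow $a$ with an adjacent face $f'$, by Theorem 3.5 the arrow $a$ is the unique common edge, and $\partial_a S$ involves exactly the two complementary paths around $f$ and $f'$. Multiplying on the appropriate side by $a$ gives $C_f \pm C_{f'} \in J(S)$, so the cyclic equivalence class of a face cycle modulo $J(S)$ propagates from one face to its neighbors. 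Starting from the boundary faces, this shows by induction on distance to the boundary that every face cycle lies in $J(S)$ modulo cyclic equivalence.

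For (ii), I would use a planar deformation argument on each sheet. Any simple cycle bounds a disk $D$ in the sheet (a union of finitely many quiver faces). By iteratively applying $\partial_a S = 0$ along an interior arrow $a$ of $D$, one replaces one side of the cycle by the complementary path across the adjacent face; each such move is a cyclic equivalence modulo $J(S)$ and strictly decreases the number of faces enclosed. Iterating collapses the cycle entirely, expressing the original cycle as a sum of face cycles modulo $J(S)$ and cyclic equivalence. Combined with (i), this gives rigidity of $S$.

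The main obstacle I anticipate is the careful sign bookkeeping for clockwise versus anti-clockwise faces, so that the induction in (i) produces the correct relation $C_f \equiv \pm C_{f'} \pmod{J(S)}$ at each step, together with a rigorous monotone invariant (e.g., the area of the enclosed planar region) for the deformation argument in (ii). A further subtle point is the behavior at spines of the cylinder where several sheets meet: although each face lies on a single sheet, a cycle in $Q^{u,e}$ may use arrows on a spine shared by multiple sheets, so the sheet-by-sheet induction must be coordinated along such common spines to rule out global cycles that never become fully trivial under the local moves.
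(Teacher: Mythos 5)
Your proposal takes essentially the same approach as the paper, which defers the details of this theorem to \cite{k17} but sketches exactly your two-step argument: every face is shown to lie in the Jacobian ideal by induction on its distance from the boundary of the sheet, and then an arbitrary cycle is reduced to a product of a face and a smaller cycle by the same area-decreasing deformation (the paper's notion of a ``differentiable'' cycle) that you describe in part (ii). The spine issue you flag at the end is resolved in the paper by that same deformation argument (compare the proof of the corollary for $Q^{u,v}$), so your outline matches the paper's treatment, with your boundary-face base case and face-to-face propagation filling in details the paper leaves to the citation.
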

The proof in \cite{k17} in the case of $Q^{u,e}$ used the fact that each face is oriented and the fact that we are allowed to use cycles up to cyclic equivalence.  First  we proved that every face belongs to $J(S_r)$, where $S_r$ is the superpotential of the quiver on the rth sheet. The proof of this is by induction on the distance of a face from the boundary of the quiver.  Once we show that each face belongs to $J(S_r)$, we observe that every other cycle in the quiver is in fact multiplication of a face and a cycle in the quiver.  Since every face belongs to the Jacobian ideal, the original cycle also does. 

This argument can still be applied to the quivers $Q^{u,v}$ due to their structure observed in Theorem~\ref{Theorem:gluing}.  We proved here that each face of the $Q^{u,v}$ is also oriented and any two faces on a sheet share at most one edge.  Hence the induction argument from \cite{k17} applies.  In particular, the above result is also true for the superpotential of a quiver $Q^{e,v}$.  This is because the identity element $e$ does not contribute anything to the quiver.  It only decides whether the indices for the word $v$ are positive or negative (which determines the direction of the vertical arrows).  So the quivers $Q^{e,v}$ and $Q^{v,e}$ are isomorphic.  
Following Theorem \ref{Theorem:gluing}, we see that $Q^{u,v}$ can be obtained by gluing $Q^{u,e}$ and $Q^{e,v}$.  Since each cycle in the resulting quiver $Q^{u,v}$ is oriented, the superpotential of the quiver contains all its faces. Also the superpotential of each of the quivers  $Q^{u,e}$ and $Q^{e,v}$ is rigid.  Hence after gluing the quivers, the superpotential on each sheet is rigid. 
\begin{defn} A cycle $C$ will be called differentiable with respect to an edge $e$ if there exists an edge $e$ in the interior of the cycle that separates the cycle into a face and a smaller cycle.
\end{defn}
It is proved in \cite{k17} that every cycle in the quiver  $Q^{u,e}$ is differentiable. A similar argument proves that any cycle in the quiver  $Q^{u,v}$ is also differentiable.  The core of the argument is that taking derivative of the cycle with respect to an edge pushes the cycle in its interior.  This process can be iterated until there are two vertices in the cycle that are connected by an edge which is not in the cycle.  
\begin{corollary}The superpotential $S$ of the quiver $Q^{u,v}$ is a rigid potential. 
\end{corollary}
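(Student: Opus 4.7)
The plan is to reduce the rigidity of $Q^{u,v}$ to the already-established rigidity of $Q^{u,e}$ and $Q^{e,v}$ via the gluing description from Theorem~\ref{Theorem:gluing}, exploiting the dimer structure on the cylinder to localize the argument to individual sheets. Since each face of $Q^{u,v}$ projects onto a single edge of $\Gamma$, the superpotential decomposes as $S=\sum_r S_r$, where $S_r$ is the signed sum of oriented faces lying on the $r$th sheet. It therefore suffices to show that every cycle of $Q^{u,v}$ is cyclically equivalent to an element of the Jacobian ideal $J(S)$ by working one sheet at a time, and then reconciling contributions from faces meeting the spines.

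First I would establish that every face $F$ on a sheet lies in $J(S)$. Because the restriction of $Q^{u,v}$ to a sheet is planar (by the planarity lemma on each sheet) and every face is oriented (by the preceding proposition), the induction from \cite{k17} on the distance from $F$ to the boundary of the sheet applies verbatim. The base case treats faces adjacent to the top/bottom frozen rows or to the two outer strings of the sheet, where one edge of $F$ is a cyclic derivative generator up to a boundary term. For the inductive step, the key input is Theorem~3.4: off a spine, two faces share at most one edge, so $\partial_a S_r$ for an interior edge $a$ of $F$ yields $F$ plus a single neighboring face closer to the boundary, which is already in $J(S)$ by the inductive hypothesis.

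Next I would promote this from faces to arbitrary cycles by invoking the notion of differentiable cycle. Any cycle $C$ contained in a sheet is differentiable in the sense of Definition~4.2: some interior edge splits $C$ into a face and a smaller cycle, and iteration writes $C$, modulo cyclic equivalence, as a combination of faces, each already shown to lie in $J(S)$. Cycles that cross a spine are handled sheet-by-sheet, observing that the spine edges are shared by $S_r$ and $S_{r'}$ for the adjacent sheets glued along that spine, so that the relevant derivatives combine consistently and the argument reduces to the planar cases already handled.

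The step I expect to be the main obstacle is the behavior along a spine of the cylinder, where—as illustrated in the $D_4$ example following Theorem~3.4—two faces from distinct sheets may legitimately share two consecutive edges, and the derivative $\partial_a S$ for a spine edge $a$ mixes contributions from $S_r$ and $S_{r'}$. To handle this, I will verify carefully using the two gluing cases in the proof of Theorem~\ref{Theorem:gluing} (edges agreeing or disagreeing in direction) that the oriented faces meeting the spine in $Q^{u,v}$ inherit compatible orientations from $Q^{u,e}$ and $Q^{e,v}$, so that the induction on distance to the (genuine) boundary of $Q^{u,v}$ terminates properly and the spurious contributions from the other sheet cancel or are themselves cycles already known to lie in $J(S)$.
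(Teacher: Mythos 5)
Your sheet-local argument is the paper's own: you use the gluing description of Theorem~\ref{Theorem:gluing}, the orientedness of faces, planarity within each sheet, and the at-most-one-shared-edge property so that the induction of \cite{k17} on distance to the boundary puts every face in $J(S)$, and differentiability then handles any cycle contained in a single sheet. Up to this point you are reproducing the paper's proof, and that part of your proposal is fine.

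The gap is in your treatment of cycles that span several sheets, which is the only genuinely new content of this corollary. You assert that such cycles are ``handled sheet-by-sheet,'' that the derivatives along spine edges ``combine consistently,'' and that ``spurious contributions from the other sheet cancel or are themselves cycles already known to lie in $J(S)$.'' This is not an argument: a cycle crossing a spine does not restrict to, or decompose into, cycles on the individual sheets, so there is nothing to which your planar case can be applied, and no cancellation mechanism is ever exhibited. The paper closes exactly this gap with a concrete deformation step that your proposal lacks. Since a multi-sheet cycle $C$ is differentiable with respect to some edge $e\colon v_1\to v_2$, the Jacobian relation $\partial_e S$ identifies, modulo $J(S)$, two paths from $v_1$ to $v_2$ lying on one sheet: one is an arc of $C$ and the other lies in the interior of $C$. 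Replacing the arc of $C$ by the interior path produces a cycle congruent to $C$ modulo $J(S)$ that encloses strictly smaller area; iterating this substitution eventually writes $C$, modulo $J(S)$, as a product of a face and a cycle, and since each face lies on a single sheet and already belongs to $J(S)$, which is an ideal, so does $C$. Your closing worry about orientations along the spine (two faces sharing two edges, as in Figure~\ref{2facesharing}) points at a real feature of the geometry, but verifying orientation compatibility there does not substitute for this area-reduction step; without it your induction has no way to terminate on cycles that leave a sheet, and the proof does not close.
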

\begin{proof}
If a cycle lies on a single sheet of the cylinder, it is in the Jacobian ideal generated by derivatives of the superpotential on that sheet.  Hence it is also in $J(S)$.  If the cycle $C$ lies in more than one sheets, we know that it is differentiable with respect to an edge, say $e: v_1 \to v_2$.  We know that in the Jacobian algebra, two paths from $v_1$ to $v_2$ are on the same sheet and are identified. One of these paths is a part of $C$ and the other is in the interior of $C$. Let us replace the path that is on $C$ by its equivalent path in the interior of $C$.  In other words, this reduces the area covered by $C$.  We continue this process until $C$ can be written as a product of a face and a cycle.  Since a face always belongs to exactly one sheet, and it is already in the Jacobian ideal $J(S)$, so are any of its multiples.  This proves that $C$ is also belongs to the Jacobian ideal. 
\end{proof}

%
%

\section{Future interests}

The quivers from double Bruhat cells have many frozen vertices and hence, in order to categorify these cluster algebras with frozen variables, one has to find an appropriate model similar to Pressland's model in \cite{p17}.  The methods in Pressland's work cannot be applied directly as the number of frozen vertices do not exactly match in these two cases.  The quivers defined in \cite{p17} have the number of frozen vertices double the number of mutable vertices, whereas the number of frozen and mutable in our case do not have this relation.  This is one of the reasons we add the arrows in the quiver between frozen vertices.  The quivers in Pressland's setting have arrows connecting frozen vertices following a certain rule.  This rule seems to match the current rule we have in our setting.  The cycles obtained in this way are also oriented. 

The frozen Jacobian algebras can be defined for the BFZ quivers with superpotentials, as defined in \cite{p17}.  This algebra does not include derivatives with respect to boundary arrows. In the setting of this thesis, we need to include the cyclic derivatives with respect to the edges between frozen vertices (or the boundary arrows), i.e., for a quiver $Q$, a superpotential $W$, and a frozen subquiver $F$,  \[A=J(Q,F,W) = \CC(Q)/\langle\partial_aW | a \in Q_1\rangle.\]
The boundary algebra $B$ will be $eAe$ where the idempotent element $e$ is the sum of idempotents at every frozen vertex.  In order to apply Pressland's result, the first step would be to show that the frozen Jacobian algebra is bimodule internally 3-Calabi--Yau.  For a cluster tilting object whose endomorphism algebra is isomorphic to the Jacobian algebra $A(Q,W)$, the mutation of the titling object is not always isomorphic to the Jacobian algebra of a mutation of the potential.  In \cite{birs11}, the necessary conditions for the above statement to hold are given.  We would like to study those conditions in this case of BFZ quivers and the superpotentials.

Moreover, the goal of this project is not just to categorify the cluster structure, but also to recognize the cluster monomials as the elements of the Lusztig's dual canonical basis.  Geiss, Leclerc and Schr\"{o}er were able to do this using their categorification in type ADE.  Even though the result is not exactly the original motivation for the definition of cluster algebras, it brings us closer to it since cluster monomials are shown to be inside the dual of the required basis.

\bibliographystyle{alpha}

\end{document}